\newtheorem{theorem}{Theorem}[section]
\newtheorem{proposition}[theorem]{Proposition}
\newtheorem{lemma}[theorem]{Lemma}
\newtheorem{assumption}[theorem]{Assumption}
\newtheorem{definition}[theorem]{Definition}
\newcommand{\R}{\mathbb R}
\newcommand{\f}[1]{\mathbf{#1}}
\newcommand{\norm}[1]{\left\| #1 \right\|}
\newcommand{\abs}[1]{\lvert#1\rvert}
\newcommand{\set}[1]{\left\{ #1\right\}}
\newcommand{\lapla}{\Delta}
\newcommand{\manifold}{\mathcal{M}}
\numberwithin{equation}{section}
\begin{document}
\title{Superconvergence of differential structure for finite element methods on perturbed surface meshes\thanks{Emails: guozhi.dong@csu.edu.cn; hailong.guo@unimelb.edu.au; guoting@hunnu.edu.cn}}
\author[$\dagger$]{Guozhi Dong}
\author[$\ddagger$]{Hailong Guo}
\author[$\star$]{Ting Guo}
\affil[$\dagger$]{School of Mathematics and Statistics, HNP-LAMA, Central South University, Changsha 410083, China}
\affil[$\ddagger$]{School of Mathematics and Statistics, The University of Melbourne, Parkville, VIC, 3010, Australia}
\affil[$\star$]{Key Laboratory of Computing and Stochastic Mathematics (Ministry of Education), School of Mathematics and Statistics, Hunan Normal University, Changsha 410081, China}
\date{}
\maketitle
\begin{abstract}
Superconvergence of differential structure on discretized surfaces is studied in this paper. The newly introduced geometric supercloseness provides us with a fundamental tool to  prove the superconvergence of gradient recovery on deviated surfaces. An algorithmic framework for gradient recovery without exact geometric information is introduced.  Several numerical examples are documented to validate the theoretical results.

	\vskip .3cm
	{\bf AMS subject classifications.} \ {41A25, 65N15, 65N30}
	\vskip .3cm
	
	{\bf Key words.} \ {Superconvergence, differential structure, discretized surfaces with deviation,  geometric supercloseness,  gradient recovery.}
\end{abstract}


\section{Introduction}
\label{sec:into}

The numerical solution of partial differential equations on surfaces or more general surfaces has been the subject of much systematic investigation.  Many efficient numerical methods have been developed since the pioneering work of Dziuk \cite{Dziuk1988}.  However, their numerical analysis including the {\it a priori} error analysis of surface finite element methods \cite{DziukElliott2013, BDN} and the {\it a posteriori} error analysis \cite{WeiChenHuang2010, DonGuo20} usually requires exact information of the surfaces. For instance, many methods ask that the vertices of discrete surfaces  are located on the underlying surfaces and the exact unit normal vectors at the given vertices are known. This is neither theoretically complete nor practically available since the exact geometric information is often blind to users in reality.
Therefore, it is of interest and also practically meaningful to investigate the problems where the exact geometric information is not given.
We particularly pay attention to the cases when solutions contain differential structures of the surfaces. First-order differential structures involve tangential spaces and normal spaces of the surfaces, while the former is our focus in this paper. Typical examples are tangential vector fields on surfaces and gradients of scalar functions on surfaces. In such situations, it is desired to know the conditions for geometric discretization to guarantee optimal convergence rates either in the a priori or the a posteriori error analysis.
Fundamental questions here are like (i) to what extent that the errors of geometric approximations will affect the total errors of the numerical methods, and (ii) what is the hypothesis on the geometric discretization in order to have optimal convergence of numerical solutions or superconvergence of the differential structure of surfaces.

This paper aims to provide some insight into these questions. Such problems have been open in the community for a while. 
For instance in  \cite{WeiChenHuang2010}, gradient recovery schemes on general surfaces have been systematically investigated, and superconvergence rates of several recovery schemes were proven provided that the exact geometry is given. The supercloseness of the numerical data has played a crucial role in establishing the theoretical results in \cite{WeiChenHuang2010}.
Moreover, the following two interesting questions arose in \cite{WeiChenHuang2010}: (i) How to design gradient recovery algorithms given no exact information of the surfaces (i.e., no exact normal vectors and no exact vertices)? (ii) Is it possible to preserve the superconvergence rates of gradient recovery schemes using triangulated meshes whose vertices are not located on the exact surfaces but in a $\mathcal{O}(h^2)$ neighborhoods of the underlying surfaces? Here $h$ is the scale of the mesh size.

These questions partially motivate the research  here. In particular, \emph{superconvergence of gradient recovery on surfaces} is  connected to the concept of \emph{geometric supercloseness} which we propose in this paper.
Gradient recovery techniques for data defined in Euclidean domain have been intensively investigated \cite{BankXu2003,XuZhang2004,AinsworthOden2000,Lakhany2000,ZhangNaga2005,ZZ1992,ZZ1992b, GXZ2019, GZZ2017}, and also find many interesting applications, e.g. \cite{NagaZhangZhou2006,NagaZhang2012, GZZ2018, CGZZ2017}.
The methods for data on discretized surfaces have been studied, e.g., in \cite{DuJu2005, WeiChenHuang2010}.
Using the idea of tangential projection,  many of the recovery algorithms in the setting of the Euclidean domain have been generalized to the setting of surfaces.
However, there are certain restrictions in the existing approaches as many of them require the exact geometry (exact vertices, exact normal vectors) either for designing algorithms or for proving superconvergent rates.
In \cite{DonGuo20} a novel gradient recovery scheme for data defined on discretized surfaces was proposed, which is called the parametric polynomial preserving recovery (PPPR) method. PPPR does not rely on the exact geometry-prior, and it was proven to be able to achieve superconvergence under mildly structured meshes, including high curvature cases. That can be thought of partially answered the first open question in \cite{WeiChenHuang2010}.
However, the theoretical proof for the superconvergence result in \cite{DonGuo20} still requires that the vertices are located on the underlying exact surfaces, though numerically the superconvergence has been observed when this condition is violated.

\paragraph{Contribution}
In this paper, we first construct some examples to show that there exist cases where the superconvergence of gradient recovery on surfaces is not guaranteed given barely the $\mathcal{O}(h^2)$ vertex condition. In particular, the examples show that data supercloseness does not guarantee superconvergence of the recovered gradient, in contrast to the exact nodal points case.
We introduce a new concept called \emph{geometric supercloseness}, which gives the property of superconvergence of differential structure on deviated discretizations of surfaces.
Especially, we provide conditions of the discretized meshes under which the geometric supercloseness property can be proven. 
With the tool of  superconvergence of differential structure, we provide complete answers to the two open questions in \cite{WeiChenHuang2010}.  To do this, we generalize the idea from \cite{DonGuo20}. That is the idea of local parametric polynomials can be further developed to cover other methods, e.g., superconvergence patch recovery (SPR), of which their counterparts using exact geometric information have been discussed in \cite{WeiChenHuang2010}. In this vein, we develop an isoparametric SPR schemes for data on discretized surfaces.
It consists of two-level recoveries: The first is recovering the Jacobian of local geometric mapping over every parametric domain, and the other is iso-parametrically recovering the gradient of the solution function with respect to the parametric arguments.
Based on such a two-level scheme, we are able to prove that the superconvergence of the resulting gradient recovery, which require the superconvergence of the differential structure of the surface as well as the superconvergence of the isoparametric gradient of the function data (i.e., numerical solutions).
 
\paragraph{Structure}
The rest of the paper is organized as follows:
In Section \ref{sec:geometric}, the general geometric setting and notations are explained, and some counter examples on the superconvergence of gradient recovery with arbitrary $\mathcal{O}(h^2)$ deviated vertex condition are provided.
In Section \ref{sec:geometric_superclose}, the concept of geometric supercloseness is introduced and the precise hypothesis on geometric discretization is provided for proving the superconvergence of differential structure on deviated surfaces.
Section \ref{sec:app_1} proves the superconvergence of gradient recovery scheme on deviated surfaces given the geometric supercloseness condition.
In Section \ref{sec:numerics}, we show numerical examples which verify the theoretical findings. 
Some conclusive remarks are given in Section \ref{sec:conclusion}.

\section{Geometric setting and counter examples}
\label{sec:geometric}
We start this section by specifying some of the geometrical notations which are frequently referred in the paper. Then we  provide a counter example to show that the superconvergence of gradient recovery is not guaranteed under general $\mathcal{O}(h^2)$ perturbation of vertices.
\subsection{Geometric setting}
$\manifold$ is a general two dimensional $C^3$ smooth compact hypersurface embedded in $\R^{3}$ which is endowed with a Riemann metric $g$, and $\manifold_h=\bigcup_{j\in J_h} \tau_{h,j}$ is a triangular approximation of $\manifold$, with $h=\max_{j\in J_h} \mbox{diam}(\tau_{h,j})$ being the maximum diameter of the triangles $\tau_{h,j}$.
Here $J_h$ and $I_h$ are the index sets for triangles and vertices of $\manifold_{h}$, respectively.
We denote  $\set{\tau_j}_{j\in J_h}$ the corresponding curved triangles which satisfy $\bigcup_{j\in J_h} \tau_j= \manifold$.
Note that the vertices of $\manifold_h$ do not necessarily locate on $\manifold$, therefore $\tau_j$ and $\tau_{h,j}$ may have no common vertices.
In the following study, we introduce $\manifold^*_h$ to be the counterpart of $\manifold_h$ with the same number of vertices, all of which are located on $\manifold$. 
To obtain $\manifold^*_h$, we project $\set{x_{h,i}}_{i\in I_h}$ the vertices of $\manifold_h$ along unit normal direction of $\manifold$ to have $\set{x^*_{h,i}}_{i\in I_h}$ the vertices of $\manifold_h^*$. 
Then we connect $\set{x^*_{h,i}}_{i\in I_h}$ using the same order as the connection of  $\set{x_{h,i}}_{i\in I_h}$, which gives the triangulation of $\manifold^*_h$.
Then $\set{\tau^*_{h,j}}_{j\in J_h} $ denote the corresponding triangles on $\manifold^*_h$.
To illustrate the main idea, we focus on  the linear surface finite element method \cite{Dziuk1988}.  In that case,  the nodal points  simply consist of  all the vertices of $\manifold_h$. 

In \cite{WeiChenHuang2010, DuJu2005}, gradient recovery methods have been generalized from planar domain to surfaces, while they are restricted to the case that the vertices are located on the underlying exact surface. In other words, they have been only studied in the case that the discretization is given by, corresponding to our notation, $\manifold_h^*$.
It has been, however, conjectured that the superconvergence of gradient recovery on general discretized surfaces, like $\manifold_h$, may be proven if the vertices of $\manifold_h$ are in a $\mathcal{O}(h^2)$ neighborhood of the corresponding vertices of $\manifold^*_h$. That is the following vertex-deviation condition
\begin{equation}
\label{eq:vertex_error}
\abs{x^*_{h,i}-x_{h,i}}= \mathcal{O}(h^2) \quad   \text{ for all } i\in I_h.
\end{equation}

We recall the transform operators between the function spaces on $\manifold$ and on $\manifold_h$ (or similarly $\manifold_h^*$). Let $\mathcal{V}(\manifold)$ and $\mathcal{V}(\manifold_h)$ be some ansatz function spaces. We define the  following transform operators
\begin{equation}
\label{eq:transform}
\begin{aligned}
T_h : \mathcal{V}(\manifold) &\to  \mathcal{V}(\manifold_h); \\
v &\mapsto  v \circ P_h,
\end{aligned}\quad
\text{
	and 
}\quad
\begin{aligned}
(T_h)^{-1}:  \mathcal{V}(\manifold_h) &\to  \mathcal{V}(\manifold); \\
v_h &\mapsto   v_h \circ P_h^{-1},
\end{aligned}
\end{equation}
where $P_h:\set{\tau_{h,j}}_{j\in J_h}\to \set{\tau_{j}}_{j\in J_h}$ is a bijective map to have the correspondence between $\set{\tau_{h,j}}_{j\in J_h}$ and $\set{\tau_j}_{j\in J_h}$.
The transform operators $(T^*_h)^\pm$  between functions on $\manifold$ and $\manifold^*_h$ can be   defined similarly. 
Note that in the following, we may abuse a bit of notation for vector valued functions, i.e., we still use the same $T_h$ (or $T^*_h$) for vector valued functions, in which case $T_h$ (or $T^*_h$) is applied to each component of the function.

In the following analysis, for each vertex $x^*_{h,i}$, a local parametrization function $\f r_i:\Omega_i \to \manifold$ is needed, which maps an open set in the parameter domain $\Omega_i \subset \R^2$ to an open set contains $x_{h,i}^*$ on the surface. Note that we take $\Omega_i$ a compact set which is the parameter domain corresponding to the selected patch on $\manifold^*_h$ around the vertex $x^*_{h,i}$, or respectively the patch on $\manifold_h$ around the vertex $x_{h,i}$.
In such a way, we define local parametrization functions $\f r_{h,i}:\Omega_i \to \manifold_h$ and $\f r_{h,i}^* :\Omega_i \to \manifold_h^*$, respectively. They are piecewise linear maps.
In addition, we use $\f r_{\tau_{h,j}}:\tau_{h,j}\to \tau_j $ and $\f r^*_{\tau_{h,j}}:\tau^*_{h,j}\to \tau_j $ to denote the local parameterizations from the small triangle pairs $\tau_{h,j}$ and $\tau^*_{h,j}$ to $\tau_j$, respectively.
Due to the smoothness assumption on $\manifold$, $\f r_i$, $\f r_{\tau_{h,j}}$ and $\f r^*_{\tau_{h,j}}$   are $C^3$ continuous for every $i\in I_h$ and $j\in J_h$. It follows that  $\f r_i\in W^{3,\infty}(\Omega_i)$ and $\f r_{\tau_{h,j}} \in W^{3,\infty}(\tau_{h,j})$ and  $\f r^*_{\tau_{h,j}} \in W^{3,\infty}(\tau^*_{h,j})$.
The condition \eqref{eq:vertex_error} indicates that triangulated surface $\manifold_h$ converges to $\manifold$ as $h\rightarrow 0$.

\begin{figure} [!h]
	\centering
	\includegraphics[width=0.6\textwidth]{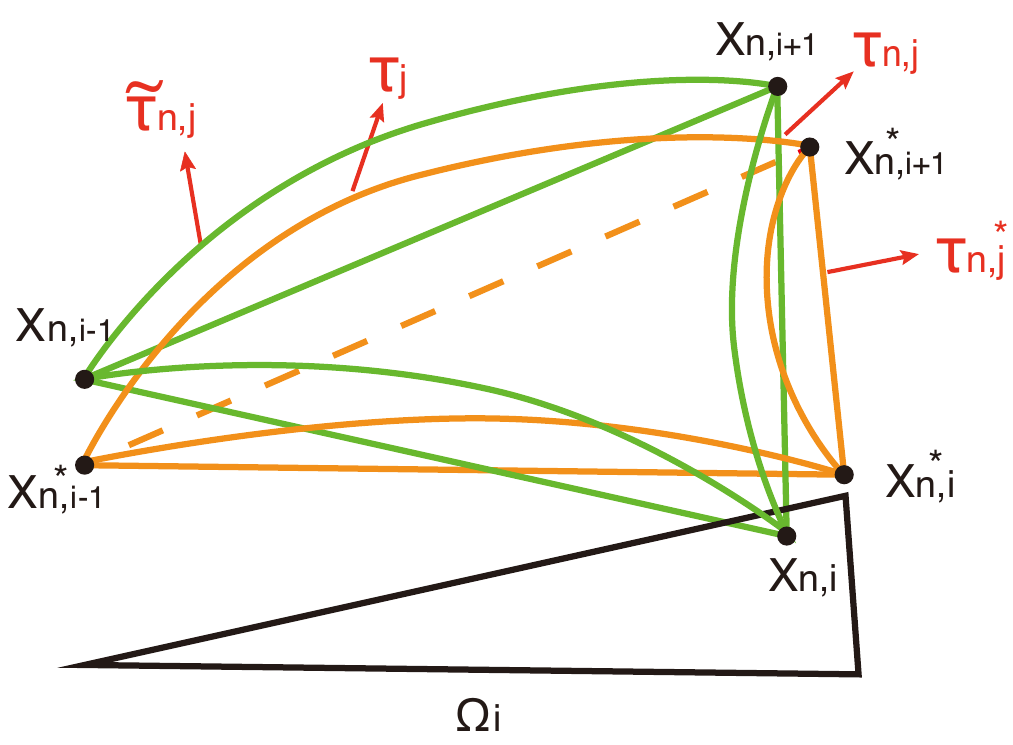}
	\caption{Here $\tau_{h,j}$ and $\tau^*_{h,j}$ are planar triangles, while $\tau_j$ and $\tilde{\tau}_{h,j}$ are curved triangles, $\Omega_i$ is the parameter domain for the surrounding patches at $x^*_{n,i}$, which we show only one triangle part of it.
 This parameter domain is used to define the local geometric maps for triangles surrounding the vertex $x^*_{n,i}$ , i.e., $\f r_i:\Omega_i \to  \tau_j$, $\f r_{h,i}:\Omega_i \to \tau_{h,j}$, $\f r_{h,i}^* :\Omega_i \to  \tau^*_{h,j}$. On the planar triangles we have the local maps $\f r_{\tau_{h,j}}:\tau_{h,j}\to \tau_j $, $ \f r^*_{\tau_{h,j}}: \tau^*_{h,j}\to \tau_j$ and $\tilde{\f r}_{\tau_{h,j}}:\tau_{h,j}\to \tilde{\tau}_{h,j} $.}
	\label{fig:surfaces}
\end{figure}
In Figure \ref{fig:surfaces}, we use one set of triangle patches indexed by some $j\in J_h$ to help illustrating the relations. There $\tau_j\subset \manifold$, $\tau^*_{h,j}\subset \manifold_h^*$, $\tau_{h,j}\subset \manifold_h$, and $\tilde{\tau}_{h,j} \subset \widetilde{\manifold}_h$, are corresponding to one triangle face on the exact surface, the patch-wise linear interpolation of the exact surface, an approximation of the exact surface, and a higher-order approximation of the exact surface (see Proposition \ref{prop:appr_surfaces} in Section \ref{sec:app_1}), respectively.

\subsection{Examples of $\mathcal{O}(h^2)$ deviated surfaces where superconvergence fails}
In this subsection, we present numerical examples which motivate further study in this paper.
They also illustrate why the concept of geometric supercloseness is needed. 
Especially, we show that barely with the condition \eqref{eq:vertex_error}, superconvergence rates of the recovered gradient may not be achieved as conjectured in \cite{WeiChenHuang2010}. 
Without loss of generality, we test the Laplace-Beltrami equation whose exact solution is $u = x_1x_2$ on the unit sphere.
For the discretization, we use uniform triangulation with nodes located on the sphere to get  $\manifold_{h}^*$, and then add $\mathcal{O}(h^2)$  random perturbation to the vertices of $\manifold_h^*$ in the tangential direction to get $\manifold_h^1$ and in the normal direction to get $\manifold_h^2$. To test the superconvergence property of the recovered gradient, we solve the Laplace-Beltrami equation on $\manifold_h^i$ ($i=1,2$) and  the numerical results are summarized in  Figure \ref{fig:cex}. We use PPPR scheme \cite{DonGuo20} for recovery which has been shown to have superconvergence under mildly structured mesh conditions with exact interpolation of the geometry:
\begin{equation}\label{eq:data_close}
\norm{ \nabla_{g_h} u_I -  \nabla_{g_h} u_h}_{0,\manifold_h^i} =\mathcal{O}(h^2) \quad \text{ for }\;  i=1,2,
\end{equation}
where $u_I$ is the linear interpolation of exact solution,  $u_h$ is the finite element solution, and $\norm{ \cdot}_{0,\manifold_h^i}$ denotes the $L^2$ norm on $\manifold_h^i$
However, there is no superconvergence observed for the recovered gradient on those $\mathcal{O}(h^2)$ deviated meshes. 
In \cite{WeiChenHuang2010}, it has been proven that if the discretization is given by exact interpolation of the geometry, the supercloseness  \eqref{eq:data_close} leads to the superconvergence of the recovery of gradient under some shape conditions on the triangulation.
The example here delivers  the message that if the discretized geometry is $\mathcal{O}(h^2)$ deviated, then \eqref{eq:data_close} is not sufficient anymore to guarantee the superconvergence of the recovered gradient. Instead, we need the superconvergence of differential structure of the surfaces.
Motivated by this, we  shall investigate the geometric conditions for superconvergence in post-processing numerical solutions.

\begin{figure} [!h]
	\centering
	\subcaptionbox{\label{fig:cex_tan}}
	{\includegraphics[width=0.47\textwidth]{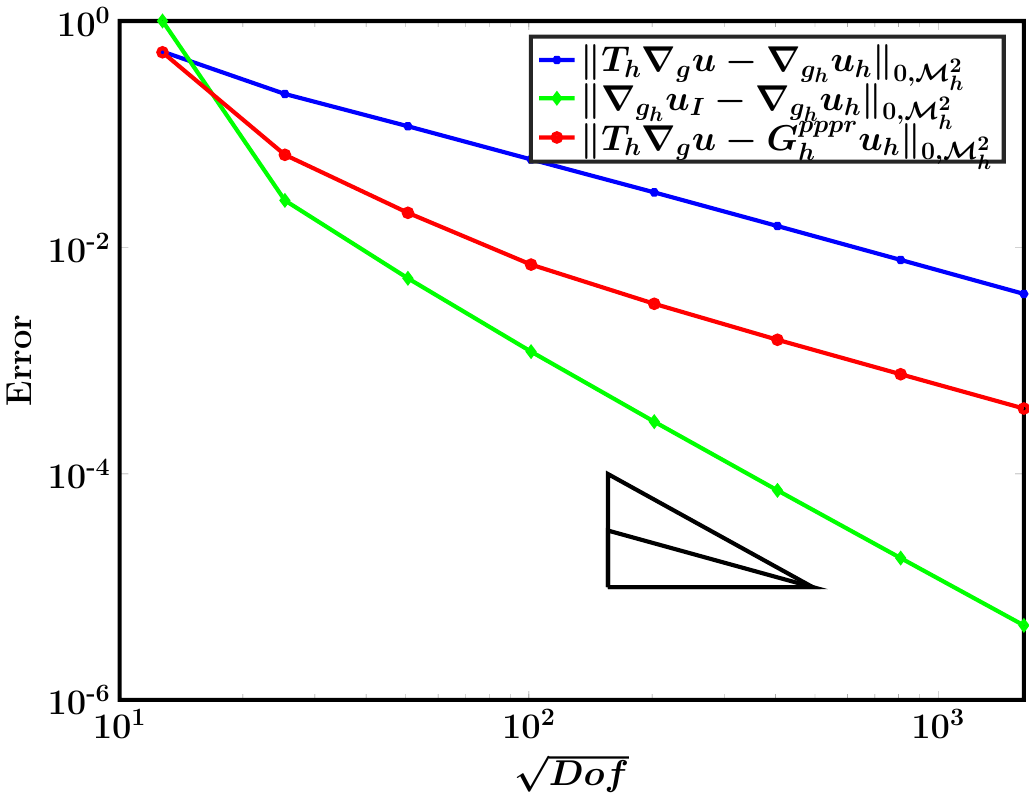}}
	\subcaptionbox{\label{fig:cex_norm}}
	{\includegraphics[width=0.47\textwidth]{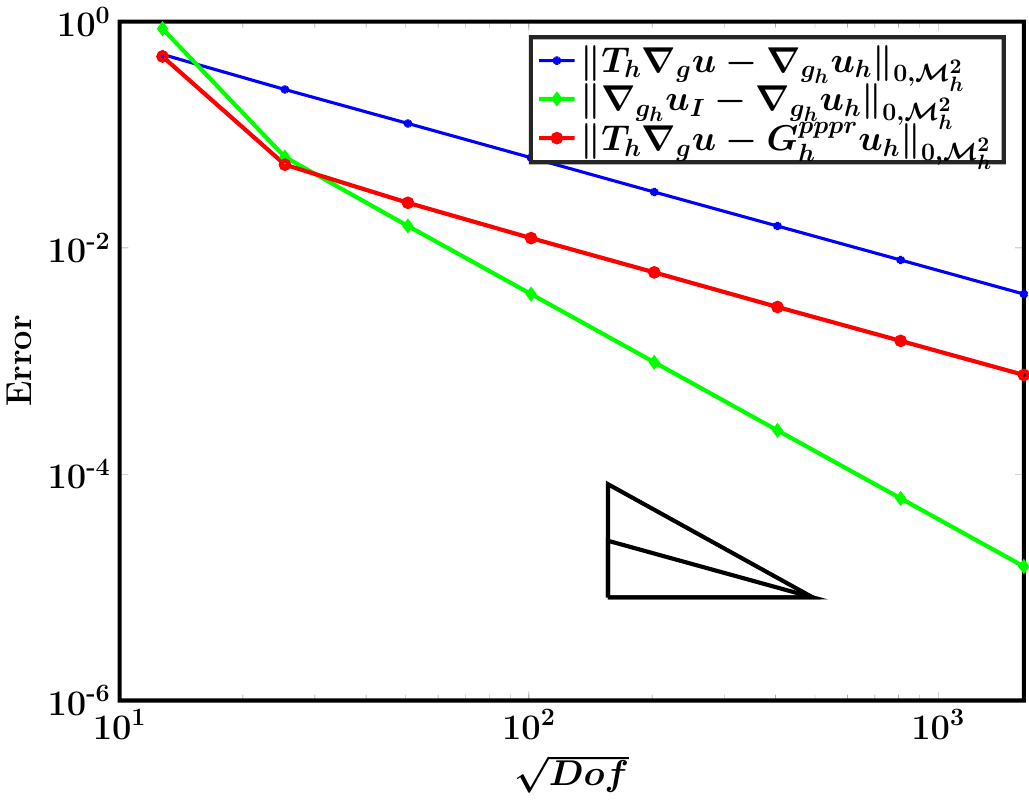}}
	\caption{Counter example of superconvergence of the recovered gradient : (a) random $\mathcal{O}(h^2)$ in the tangential direction; (b)  random $\mathcal{O}(h^2)$ in the normal direction.}
	\label{fig:cex}
\end{figure}

\section{Supercloseness of geometric approximation}
\label{sec:geometric_superclose}
In this section, we introduce  the concept of geometric supercloseness between $\manifold_{h}$ and $\manifold_{h}^*$.
This will provide us the theoretical tool to show superconvergence of differential structure on deviated surfaces.
We begin with some relevant definitions on  triangular surface meshes. 
%
%
%

For any two adjacent triangles  in $\mathcal{T}_h$, we say that the two adjacent trianlges form an $O\left(h^2\right)$ approximate parallelogram \cite{BankXu2003} if the lengths of any two opposite edges differ only by $O\left(h^2\right)$. Based on the above $\mathcal{O}(h^2)$ parallelogram condition, we define the  $\mathcal{O}(h^{2\sigma})$ irregular condition for the surface meshes .

\begin{definition}
	\label{def:2sigma_irregular}
	A triangular mesh $\mathcal{T}_h$ is said to satisfy the \emph{$\mathcal{O}(h^{2\sigma})$ irregular condition} if
	there exist a partition $\mathcal{T}_{h,1} \bigcup \mathcal{T}_{h,2}$ of $\mathcal{T}_h$ and a positive constant $\sigma$ such that every two adjacent triangles in $\mathcal{T}_{h,1} $ form an $\mathcal{O}(h^{2})$ parallelogram and 
	\[\sum_{\tau_h\subset \mathcal{T}_{h,2}} \abs{\tau_h} =\mathcal{O}(h^{2\sigma}).\]
\end{definition}
A formal definition of $\mathcal{O}(h^{2})$ parallelogram can be found in \cite{DonGuo20}.
To proceed, we introduce the concept of \emph{geometric supercloseness} here.
\begin{definition}
	Let $\manifold_{h}^*$ and $\manifold_h$ be the exact interpolation and inexact approximation, respectively. We call $\manifold_h$ is \emph{geometrically superclose} to $\manifold_{h}^*$ if the following properties are satisfied:
	\begin{itemize}
		\item [(i)] Let $g_h$ and $g_h^*$ be the metric tensors associated to $\manifold_{h}$ and $\manifold_h^*$ respectively, then
		\begin{equation}\label{eq:metric_condition}
		\norm{g_h -g_h^*}_\infty = \mathcal{O}(h^2).
		\end{equation}
		\item [(ii)] The meshes of $\manifold_{h}$ and $\manifold_h^*$ have the same number of nodes and triangles. For every triangle on $\tau_{h,j}\subset \manifold_{h}$, there exists a one-to-one triangle  $\tau^*_{h,j}\subset \manifold_h^*$ correspondingly, vice versa, such that we can find local parameterizations defined on the same parametric domain: $\f r_h:\Omega \to \tau_{h,j}$ and $\f r_h^*:\Omega \to \tau^*_{h,j}$ respectively, satisfying
		\begin{equation}\label{eq:jacob_condition}
		\norm{\partial \f r_h - \partial \f r_h^* }_{\infty,\Omega}  = \mathcal{O}(h^2).
		\end{equation}
	\end{itemize}
Here and in the following, we use the notation $\partial$ to denote the Jacobian for vector valued functions, just to distinguish the gradient operator $\nabla$ for scalar functions.
\end{definition}
In fact, \eqref{eq:jacob_condition} implies \eqref{eq:metric_condition}, however, the reverse is not true. 
We show in the next section that \eqref{eq:jacob_condition} provides the ingredient for proving the superconvergence  of recovered gradient on deviated discretization of surfaces.

To make the condition more concrete, we consider the following assumptions on the triangulations.
\begin{assumption}
	\label{ass:irregular}
	\begin{itemize}
		\item[(i)] The triangulation $\manifold_h$ is shape regular and quasi-uniform. Moreover, the $\mathcal{O}(h^{2\sigma})$ irregular condition holds for  $\manifold_h$.	
		\item[(ii)] $\manifold_h$ and $\manifold_h^*$ have the same amount of triangles and vertices, and every vertex pair of $\manifold_h$ and $\manifold^*_h$ satisfy the deviation condition that 
		\begin{equation}\label{eq:close_condition1}
		\abs{x_{h,i}-x_{h,i}^*} =\mathcal{O}(h^2)  \text{ for all } \; i\in I_h.		\end{equation}
		\item [(iii)] Denote $\set{\tau_{h,j}}_{j \in J_h}\in \manifold_{h}$ and $\set{\tau_{h,j}^*}_{j \in J_h}\in \manifold_{h}^*$ the two sets of triangle meshes. For each vertex pair $(\tau_{h,j},\tau^*_{h,j})_{j \in J_h}$  when we do a parallel shift of the two set of meshes and move one vertex pair to a common point, then the new pairs of vertices (denoted by $(\xi_{k_j,h},\xi_{k_j,h}^*)_{j \in J_h}$ after the position transformation) satisfy the distance condition that
		\begin{align}\label{eq:close_condition2}
	 &\abs{P_\tau(\xi_{k_j,h}- \xi_{k_j,h}^*)} =\mathcal{O}(h^3)	\; \\
  \text{ and }\;& \abs{P_n(\xi_{k_j,h}- \xi_{k_j,h}^*)} =\mathcal{O}(h^3),\; \text{ for all } k_j \text{ and } j \in J_h,\label{eq:close_condition3}
		\end{align}
		where $P_\tau$ and $P_n$ are the tangential and normal projections to $\manifold_h^*$ respectively. Alternatively, one can consider the normal and tangential projections to $\manifold_h$ as well. 
	\end{itemize}
\end{assumption}
We emphasis that the condition in Assumption \ref{ass:irregular} might not be the only case that leads to geometric supercloseness. However, it is quite practical to be fulfilled by the surface discretization algorithms using first order projections.

The first condition of Assumption \ref{ass:irregular} is quite standard, and it is crucial for proving superconvergence in the literature, e.g., \cite{BankXu2003,XuZhang2004,WeiChenHuang2010}.
However, in the surface setting, this condition has been assumed on $\manifold_h^*$, which is the exact interpolation of $\manifold$.
Note that condition \eqref{eq:close_condition1} is exactly \eqref{eq:vertex_error}. In  addition, we show that \eqref{eq:close_condition2} and   \eqref{eq:close_condition3} are also required for establishing the superconvergence on deviate surfaces.

In the following, we show that Assumption \ref{ass:irregular} implies the \textsf{geometric supercloseness} for $\manifold_{h}$ to approximate $\manifold_{h}^*$. To see that, we show some auxiliary results first.

We consider $\tau_h$ and $\tau^*_h$ to be an triangle pair taken from $\manifold_{h}$ and $\manifold_{h}^*$. 
In the following analysis, we take one of triangle as a parametric domain, e.g., $\tau_h$, then there exist an linear map $\Gamma$ such that $ \tau^*_h= \Gamma( \tau_h )$.  Note that $\Gamma: \R^2 \to \R^3$, therefore $\partial \Gamma$ is a $3\times 2$ constant matrix, which is invariant with respect to parallel shift of $ \tau^*_h$.
We denote $\operatorname{Id}=\left( \begin{matrix}
1 & 0 & 0\\
0 & 1 & 0
\end{matrix}\right)^\top$.

We start with establishing  the relationship  between  the edges of the pair of triangles. 
\begin{lemma}\label{lem:aux_0}
	Let $\tau_h$ and $\tau^*_h$ be shape regular triangle pair of diameter $h$ from $\manifold^*_{h}$ and $\manifold_{h}$ respectively, and the distance of each of their vertex pair satisfies condition \eqref{eq:close_condition1}, for $h  \leq 1$ sufficiently small. In addition, if the bound in \eqref{eq:close_condition2} or \eqref{eq:close_condition3}  hold, i.e., either the one with tangential or the one with normal projection.
	Then the following error bounds hold 
	\begin{equation}\label{eq:edge_diff}
	\abs{l_{k}^*-l_{k}}  =\mathcal{O} (h^3)\;   \text{ and }  \;  \abs{(l^*_{k})^2-(l_{k})^2}=  \mathcal{O} (h^4)  \; \text{ for all }\;  k=1,2,3 ,
	\end{equation}
	where $\set{l_{k} | {k=1,2,3}}$ (or $\set{l_{k}^* | {k=1,2,3}}$) denotes the length of the three edges $\set{e_{k} | {k=1,2,3}}$ (or $\set{e_{k}^* | {k=1,2,3}}$ ) of triangle $\tau_h$ (or $\tau^*_h$).
\end{lemma}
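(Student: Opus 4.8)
The plan is to exploit two facts: edge lengths are invariant under the parallel transfer of Assumption~\ref{ass:irregular}(iii), and the edges of the \emph{exact} triangle $\tau_1$ lie entirely in its own (tangent) plane, so that the tangential supercloseness bound in \eqref{eq:close_condition2} controls the only dangerous term in the length comparison. First I would translate both triangles so that one chosen vertex pair is brought to the origin; this is exactly the parallel transfer, and it preserves all edge lengths. Denote the remaining two vertices of $\tau_1$ by $\xi_1^*,\xi_2^*$ and those of $\tau_2$ by $\xi_1,\xi_2$, write $\xi_0^*=\xi_0=0$ for the common vertex, and set $\delta_k=\xi_k-\xi_k^*$ with $\delta_0=0$. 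From \eqref{eq:close_condition1} each vertex deviation is $\mathcal{O}(h^2)$, hence $\abs{\delta_k}=\mathcal{O}(h^2)$ and $\abs{\delta_k}^2=\mathcal{O}(h^4)$, while the tangential hypothesis gives $\abs{P_\tau\delta_k}=\mathcal{O}(h^3)$.

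Next I would write each of the three edge vectors of $\tau_1$ as a difference $\xi_k^*-\xi_{k'}^*$. Since $\tau_1\subset\manifold_h^*$ is a planar triangle, every such edge vector lies in the plane of $\tau_1$, i.e. is purely tangential with respect to the projection $P_\tau$ onto that plane, so $P_n(\xi_k^*-\xi_{k'}^*)=0$. The corresponding edge of $\tau_2$ is $(\xi_k^*-\xi_{k'}^*)+(\delta_k-\delta_{k'})$, whence
\[
l_{2,k}^2-l_{1,k}^2 = 2\,(\xi_k^*-\xi_{k'}^*)\cdot(\delta_k-\delta_{k'}) + \abs{\delta_k-\delta_{k'}}^2 .
\]
Because $\xi_k^*-\xi_{k'}^*$ is tangential, the pairing in the cross term annihilates the normal part of $\delta_k-\delta_{k'}$ and only its tangential part survives, giving a bound $\mathcal{O}(h)\cdot\mathcal{O}(h^3)=\mathcal{O}(h^4)$; together with $\abs{\delta_k-\delta_{k'}}^2=\mathcal{O}(h^4)$ this yields $\abs{l_{2,k}^2-l_{1,k}^2}=\mathcal{O}(h^4)$ for each $k=1,2,3$, which is the second assertion of \eqref{eq:edge_diff}.

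Finally, the linear bound follows from the difference-of-squares identity
\[
\abs{l_{2,k}-l_{1,k}} = \frac{\abs{l_{2,k}^2-l_{1,k}^2}}{l_{2,k}+l_{1,k}} .
\]
By shape regularity and diameter $h$, both edge lengths are $\Theta(h)$, so the denominator is bounded below by a fixed constant times $h$, and $\abs{l_{2,k}-l_{1,k}}=\mathcal{O}(h^4)/\mathcal{O}(h)=\mathcal{O}(h^3)$. The step I expect to be the crux is the orthogonality argument in the cross term: it is precisely the tangential component of the vertex perturbation---only $\mathcal{O}(h^3)$ rather than the generic $\mathcal{O}(h^2)$---that enters, since the exact edge is tangential. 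Without the supercloseness bound \eqref{eq:close_condition2} one would obtain merely $\mathcal{O}(h^3)$ for the squared lengths and $\mathcal{O}(h^2)$ for the lengths, so the whole improvement hinges on correctly isolating the tangential part and verifying that $P_n(\xi_k^*-\xi_{k'}^*)=0$.
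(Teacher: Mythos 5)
Your proof is correct and takes essentially the same route as the paper: after parallel transfer, decompose the vertex perturbation into its tangential part ($\mathcal{O}(h^3)$ by the assumed bound, entering the length comparison linearly) and its normal part ($\mathcal{O}(h^2)$, entering only quadratically and hence contributing $\mathcal{O}(h^4)$). The only difference is organizational: you establish the $\mathcal{O}(h^4)$ bound on $\abs{l_{2,k}^2-l_{1,k}^2}$ first via the inner-product expansion and then divide by $l_{1,k}+l_{2,k}=\Theta(h)$ to get the $\mathcal{O}(h^3)$ bound, whereas the paper bounds $\abs{l_{2,k}-l_{1,k}}$ directly through a Pythagorean square-root expansion and then recovers the squared-length estimate from $a^2-b^2=(a-b)(a+b)$.
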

\begin{proof}
 To show the first inequality,  we do parallel and vertical translation of $e_{k}^*$ to a common point with $e_{k}$. Then,  for sufficiently small $h$,  we have 
	\begin{equation*}
	\begin{aligned}
	l_{k}^*  \leq & \sqrt{ (l_{k}+ c_{2,k}h^3 )^2+c_{1,k}h^4} \\
	=& l_{k}\sqrt{ 1+ 2c_{2,k}\frac{h^3}{l_k} + c_{2,k}^2 \frac{h^6}{l^2_{k}}+c_{1,k}\frac{h^4}{ l^2_{k}}}\\
	\leq & l_{k} \left(1 + C (\frac{h^3}{l_k} +  \frac{h^6}{l^2_{k}}+ \frac{h^4}{ l^2_{k}} ) \right).
	\end{aligned}
	\end{equation*}
	Here both $c_{1,k}$ and $c_{2,k}$ are either positive or negative constants, which are corresponding to the tangential and normal decomposed distance mismatches respectively. 
	Noticing  the fact that $l_{k}\sim h$ for all $k=1,2,3$,  we obtain  the first estimate in \eqref{eq:edge_diff}.

	For the second estimate in \eqref{eq:edge_diff}, it is sufficient to use $a^2-b^2=(a-b)(a+b)$, take into account that the edge lengths are of order $h$, and combine with the first estimate we have the conclusion.
\end{proof}
 
Using the above relationship of edges between the two triangles, we can prove the following result.
\begin{lemma}\label{lem:aux1}
	Under the same condition as Lemma \ref{lem:aux_0},  we have
	\begin{equation}\label{eq:triangle_area}
	\abs{\mathcal{A}(\tau^*_h)  - \mathcal{A}(\tau_h) } = \mathcal{O}( h^4)  \; \text{ and }  \;  \abs{\frac{\mathcal{A}(\tau_h^*) }{\mathcal{A}(\tau_h) } -1} = \mathcal{O}( h^2),
	\end{equation}
	where $\mathcal{A}$ is the area function.
\end{lemma}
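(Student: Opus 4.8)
The plan is to express the area of a triangle purely through its three edge lengths and then feed in the edge estimates already established in Lemma \ref{lem:aux_0}. The cleanest vehicle is the squared-area (Heron / Cayley--Menger) identity
\[
16\,\mathcal{A}(\tau_i)^2 = 2\left(l_{i,1}^2 l_{i,2}^2 + l_{i,2}^2 l_{i,3}^2 + l_{i,3}^2 l_{i,1}^2\right) - \left(l_{i,1}^4 + l_{i,2}^4 + l_{i,3}^4\right), \qquad i=1,2,
\]
which holds for any Euclidean triangle irrespective of the ambient dimension and hence applies verbatim to $\tau_1,\tau_2\subset\R^3$. Introducing the fixed quadratic polynomial $F(s_1,s_2,s_3)=2(s_1 s_2+s_2 s_3+s_3 s_1)-(s_1^2+s_2^2+s_3^2)$, so that $16\,\mathcal{A}(\tau_i)^2=F(l_{i,1}^2,l_{i,2}^2,l_{i,3}^2)$, I would study the difference $16\left(\mathcal{A}(\tau_2)^2-\mathcal{A}(\tau_1)^2\right)=F(l_{2,1}^2,l_{2,2}^2,l_{2,3}^2)-F(l_{1,1}^2,l_{1,2}^2,l_{1,3}^2)$.

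Next I would estimate this difference. Since $F$ is quadratic, its increment across two points equals, by the mean value theorem, the gradient of $F$ at an intermediate point dotted with the increment vector whose entries are $l_{2,k}^2-l_{1,k}^2=\mathcal{O}(h^4)$ by the second bound in \eqref{eq:edge_diff}. The entries of $\nabla F$ are linear combinations of the squared edge lengths, hence of order $h^2$ because $l_{i,k}\sim h$ by shape regularity. Multiplying the two orders gives $\mathcal{A}(\tau_2)^2-\mathcal{A}(\tau_1)^2=\mathcal{O}(h^6)$ (one may equally expand $F$ directly and observe that every resulting term is a product of one $\mathcal{O}(h^4)$ factor with one $\mathcal{O}(h^2)$ factor).

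Finally I would factor the difference of squares as $\mathcal{A}(\tau_2)^2-\mathcal{A}(\tau_1)^2=\left(\mathcal{A}(\tau_2)-\mathcal{A}(\tau_1)\right)\left(\mathcal{A}(\tau_2)+\mathcal{A}(\tau_1)\right)$. Shape regularity forces $\mathcal{A}(\tau_1),\mathcal{A}(\tau_2)\sim h^2$, so the sum $\mathcal{A}(\tau_2)+\mathcal{A}(\tau_1)\gtrsim h^2$ is bounded below; dividing yields the first claim $\abs{\mathcal{A}(\tau_2)-\mathcal{A}(\tau_1)}=\mathcal{O}(h^6)/\mathcal{O}(h^2)=\mathcal{O}(h^4)$. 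The ratio estimate then follows at once by dividing this bound by $\mathcal{A}(\tau_1)\sim h^2$, giving $\abs{\mathcal{A}(\tau_2)/\mathcal{A}(\tau_1)-1}=\abs{\mathcal{A}(\tau_2)-\mathcal{A}(\tau_1)}/\mathcal{A}(\tau_1)=\mathcal{O}(h^2)$.

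The computation is essentially routine, and I do not expect a substantial obstacle; the one point requiring care is that both final division steps need a \emph{lower} bound $\mathcal{A}(\tau_i)\gtrsim h^2$, not merely an upper bound. This is exactly where the shape regularity hypothesis of Lemma \ref{lem:aux_0} is indispensable: it simultaneously delivers $l_{i,k}\sim h$ (used to size $\nabla F$) and the nondegeneracy $\mathcal{A}(\tau_i)\sim h^2$ (used to keep the area sum away from zero). Without this nondegeneracy the passage from the $\mathcal{O}(h^6)$ bound on the squared areas to the $\mathcal{O}(h^4)$ absolute bound, as well as the $\mathcal{O}(h^2)$ ratio bound, would both degrade.
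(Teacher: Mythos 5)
Your proof is correct, and it takes a genuinely different algebraic route from the paper's, even though both ultimately rest on Heron's formula and the edge estimates of Lemma \ref{lem:aux_0}. The paper writes Heron's formula in semiperimeter form, $\mathcal{A}=\sqrt{s\prod_k(s-l_k)}$, perturbs each factor by $\mathcal{O}(h^3)$ using the \emph{first} estimate in \eqref{eq:edge_diff}, and expands the square root of the perturbed product to get the ratio bound $\abs{\mathcal{A}(\tau_2)/\mathcal{A}(\tau_1)-1}=\mathcal{O}(h^2)$ first; the difference bound then follows by multiplying through by $\mathcal{A}(\tau_1)=\mathcal{O}(h^2)$. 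You instead use the polynomial form $16\mathcal{A}^2=F(l_1^2,l_2^2,l_3^2)$ with $F$ quadratic, feed in the \emph{second} estimate $l_{2,k}^2-l_{1,k}^2=\mathcal{O}(h^4)$ (which the paper's own proof never actually invokes), obtain $\mathcal{A}(\tau_2)^2-\mathcal{A}(\tau_1)^2=\mathcal{O}(h^6)$ by the mean value theorem, and then factor the difference of squares --- so you prove the difference bound first and deduce the ratio bound second, reversing the paper's order. What your route buys is cleanliness: working with squared areas and a polynomial identity avoids the paper's square-root expansions and the attendant bookkeeping with sign-indefinite constants $d_k(\f c)$, $q_k(\f c)$, and it explains why Lemma \ref{lem:aux_0} states the squared-length estimate at all. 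What the paper's route buys is that the ratio --- which is the quantity actually consumed in Proposition \ref{prop:aux2}, since $\sqrt{\det g_\Gamma}=\mathcal{A}(\tau_2)/\mathcal{A}(\tau_1)$ --- comes out in a single step. Both arguments need shape regularity as a nondegeneracy lower bound, in your case $\mathcal{A}(\tau_i)\gtrsim h^2$ and in the paper's case $s_1-l_{1,k}\sim h$; you are right to flag this as the one point where the argument could silently fail, and your handling of it is correct.
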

\begin{proof}
Lemma \ref{lem:aux_0} implies there exists $c_k$  either being positive or negative such that 
 $l_{k}^*=l_{k}+c_k h^3$.  Using the Heron's formula,  we  can calculate the area of $\tau_h$ and $\tau_h^*$ as
	\[\mathcal{A}(\tau_h)=\sqrt{s_1\prod_k(s_1-l_{k})}  \text{ and }  \mathcal{A}(\tau_h^*)=\sqrt{(s_1+d_0(\f c) h^3)\prod_k(s_1-l_{k} + d_k(\f c) h^3)} , \]
	where $s_1=\sum_k \frac{l_{k}}{2}$, and $\f c=(c_1,c_2,c_3)$.  Thus, we have
	\begin{equation}\label{eq:area_est}
	\begin{aligned}
	\abs{ \frac{\mathcal{A}(\tau_h^*)}{\mathcal{A}(\tau_h)} -1}
	= &\abs{\sqrt{(1+d_0(\f c) \frac{h^3}{s_1})\prod_k(1 + d_k(\f c)\frac{ h^3}{s_1-l_{k} })}-1} \\
	= & \abs{\sqrt{1+q_0(\f c)h^2} \prod_k \sqrt{1 + q_k(\f c)h^2} -1}\\
	\leq &  \abs{(1+Cq_0(\f c)h^2) \prod_k (1 + Cq_k(\f c)h^2) -1}\\
	\leq & C(\f q) h^2
	\end{aligned}
	\end{equation}
	for some constants $\f q=(q_0,q_1,q_2,q_3)$ depending on $\f c$, which gives a constant $C$ dependent of $\f c$.
	The first estimate in \eqref{eq:triangle_area} is obvious since $\mathcal{A}(\tau_h)$ is also of order $h^2$. Multiply with $\mathcal{A}(\tau_h)$ on both side of \eqref{eq:area_est} gives the estimate.
\end{proof}

Based on the above two lemmas, we can show the following approximation results for Jacobian matrix and metric tensor.
\begin{proposition}\label{prop:aux2}  Assume the same condition as Lemma \ref{lem:aux_0}. Let $\Gamma: \tau_h \to \tau_h^*$ be the linear transformation, and let $g_\Gamma := (\partial \Gamma)^\top\partial \Gamma$. 
	Then we have the following relations:
	\begin{itemize}
		\item [(i)] The determinate of $g_\Gamma$ satisfies
		\begin{equation}\label{eq:aux_equiv2}
		\abs{\sqrt{\det{g_\Gamma} }- 1} = \mathcal{O}(h^2).
		\end{equation}
		\item [(ii)] The Jacobian $\partial \Gamma$ and the metric $g_\Gamma$  have the following estimate
		\begin{equation}\label{eq:aux_equiv1}
		\norm{\partial \Gamma-  \operatorname{Id}}_\infty= \mathcal{O}(h) \; \text{ and } \;	\norm{g_\Gamma -\operatorname{I} }_\infty = \mathcal{O}(h^2),
		\end{equation}
		respectively, where $\operatorname{I}= \operatorname{Id}^\top \operatorname{Id}$ is the $2\times 2$ identity matrix.
		\item [(iii)] Let both conditions in \eqref{eq:close_condition2} and  \eqref{eq:close_condition3}  hold. Then we have an improved error estimate for the Jacobian matrix
		\begin{equation}\label{eq:aux_equiv0}
		\norm{\partial \Gamma-  \operatorname{Id}}_\infty= \mathcal{O}(h^2).
		\end{equation}
	\end{itemize}
\end{proposition}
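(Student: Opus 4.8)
The plan is to write the affine map $\Gamma$ explicitly through its edge vectors and reduce every assertion to the edge-length and edge-vector comparisons already furnished by Lemmas \ref{lem:aux_0} and \ref{lem:aux1}. Fix the common vertex used for the parallel transport, let $E=[\,p_1-p_0,\ p_2-p_0\,]$ be the $2\times 2$ matrix of edge vectors of the flat parametric triangle $\tau_1$, and let $F=[\,q_1-q_0,\ q_2-q_0\,]$ be the $3\times 2$ matrix of edge vectors of $\tau_2$. Since $\Gamma$ carries the vertices of $\tau_1$ to those of $\tau_2$, one has $\partial\Gamma=F E^{-1}$. Shape regularity together with $l_{1,k}\sim h$ gives $\norm{E^{-1}}=\mathcal{O}(h^{-1})$, which is the single scaling fact I use throughout.

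For part (i) I would note that, for a linear $\Gamma:\R^2\to\R^3$, the quantity $\sqrt{\det g_\Gamma}$ is precisely the area-distortion factor, so that $\sqrt{\det g_\Gamma}=\mathcal{A}(\tau_2)/\mathcal{A}(\tau_1)$. The estimate \eqref{eq:aux_equiv2} is then literally the second bound of \eqref{eq:triangle_area} in Lemma \ref{lem:aux1}, requiring no further work.

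The metric estimate in (ii) is the delicate one, because it cannot be recovered by squaring the Jacobian bound: writing $\partial\Gamma=\operatorname{Id}+D$ with $\norm{D}=\mathcal{O}(h)$ leaves the cross terms $\operatorname{Id}^\top D+D^\top\operatorname{Id}$, which are only $\mathcal{O}(h)$. Instead I would use the identity $g_\Gamma-\operatorname{I}=E^{-\top}(F^\top F-E^\top E)E^{-1}$, valid since $E^{-\top}(E^\top E)E^{-1}=\operatorname{I}$. The entries of $F^\top F$ and $E^\top E$ are inner products of edge vectors: the diagonal entries are the squared edge lengths $l_{2,k}^2$ and $l_{1,k}^2$, while the off-diagonal entries are expressed through the three squared lengths by the law of cosines. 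Hence $\norm{F^\top F-E^\top E}_\infty=\mathcal{O}(h^4)$ by the second bound in \eqref{eq:edge_diff}, and multiplying by $\norm{E^{-1}}^2=\mathcal{O}(h^{-2})$ yields $\norm{g_\Gamma-\operatorname{I}}_\infty=\mathcal{O}(h^2)$. For the Jacobian bound I would realize the parametric domain as an isometric flat copy of $\tau_1$, that is, apply one rigid motion $R$ to both triangles so that $\tau_1$ lies in $\R^2\times\{0\}$ and $\operatorname{Id}$ becomes the inclusion; then $\partial\Gamma-\operatorname{Id}=(F-\operatorname{Id}\,E)E^{-1}$, whose columns are differences of the co-transported edge vectors of $\tau_2$ and $\tau_1$. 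By the vertex condition \eqref{eq:close_condition1} these are $\mathcal{O}(h^2)$, so $\norm{\partial\Gamma-\operatorname{Id}}_\infty=\mathcal{O}(h^2)\cdot\mathcal{O}(h^{-1})=\mathcal{O}(h)$, which is \eqref{eq:aux_equiv1}.

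Part (iii) uses the same identity $\partial\Gamma-\operatorname{Id}=(F-\operatorname{Id}\,E)E^{-1}$; the only change is the size of the edge-vector differences. Adjoining the normal-projection bound in \eqref{eq:close_condition2} to the tangential one already assumed, the full difference of each transported edge vector is $\mathcal{O}(h^3)$, and the product with $\norm{E^{-1}}=\mathcal{O}(h^{-1})$ sharpens the conclusion to $\mathcal{O}(h^2)$, giving \eqref{eq:aux_equiv0}. The main obstacle I anticipate is the frame bookkeeping: one must transport \emph{both} triangles by the same rigid motion so that the columns of $F-\operatorname{Id}\,E$ are exactly the quantities controlled by \eqref{eq:close_condition1}--\eqref{eq:close_condition2}, rather than picking up an uncontrolled term of the form $(\operatorname{I}-R)(p_j-p_0)$ of order $h$; and one must keep separate that the metric estimate is governed by the squared lengths at order $h^4$, whereas the Jacobian estimate is governed by the edge vectors at order $h^2$ (respectively $h^3$).
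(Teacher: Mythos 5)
Your proof is correct and follows essentially the same route as the paper: both express the affine map through edge-vector matrices after transporting the two triangles to a common vertex and frame, i.e.\ $\partial\Gamma=FE^{-1}$ with $\norm{E^{-1}}=\mathcal{O}(h^{-1})$ from shape regularity, obtain (i) from the area-distortion identity $\sqrt{\det g_\Gamma}=\mathcal{A}(\tau_2)/\mathcal{A}(\tau_1)$ and Lemma \ref{lem:aux1}, obtain the Jacobian bounds in (ii) and (iii) from the column differences $(F-\operatorname{Id}E)E^{-1}$ controlled by \eqref{eq:close_condition1} and \eqref{eq:close_condition2}, and obtain the metric bound from the Gram identity $g_\Gamma-\operatorname{I}=E^{-\top}(F^\top F-E^\top E)E^{-1}$, which is the paper's \eqref{eq:est_g}.

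The single point of divergence is the off-diagonal entries of $F^\top F-E^\top E$. You reduce them to squared edge lengths via the law of cosines (polarization) and then invoke the second bound of \eqref{eq:edge_diff}; the paper instead asserts that the dot products $\psi_i\cdot\psi_j$ for $i\neq j$ equal $2\mathcal{A}(\tau_2)$ and appeals to Lemma \ref{lem:aux1}. The paper's identification is a slip---$2\mathcal{A}$ is the norm of the \emph{cross} product of the two edge vectors, not their dot product---although the intended $\mathcal{O}(h^4)$ bound is exactly what your polarization argument delivers, so on this sub-step your write-up is the more careful one. Your preliminary remark that the metric estimate cannot be obtained by squaring the $\mathcal{O}(h)$ Jacobian bound (the cross terms $\operatorname{Id}^\top D+D^\top\operatorname{Id}$ are only $\mathcal{O}(h)$) correctly identifies why the direct Gram computation is unavoidable, and your caution about applying one and the same rigid motion to both triangles is precisely the frame bookkeeping the paper performs when it moves the common vertex to the origin.
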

\begin{proof}	
	(i) To show \eqref{eq:aux_equiv2}, notice $\mathcal{A}(\tau_h^*) =\sqrt{\det{g_\Gamma}} \mathcal{A}(\tau_h) $.
	The second estimate of \eqref{eq:triangle_area} in Lemma \ref{lem:aux1} implies
	\[\abs{\sqrt{\det{g_\Gamma}} -1} =\abs{ \frac{\mathcal{A}(\tau_h^*) }{\mathcal{A}(\tau_h) }-1}  =\mathcal{O}(h^2) \]
	which concludes the proof of \eqref{eq:aux_equiv2}.
	
	(ii) For the first one in \eqref{eq:aux_equiv1}, we notice that 
	$g=(\partial \Gamma)^\top \partial \Gamma $. Since we choose $\tau_h$ to be the parametric domain of $\tau_h^*$, it follows that  $\tau_h^* =\partial \Gamma \tau_h + x_0$. Now we transfer the two triangles to a common vertex, and change the Cartesian coordinate and let the common vertex be at the origin.
	Let the position of two other vertices of $\tau_h$ in the new coordinate be $\xi_1=(\xi_{1,1},\xi_{1,2},0)$ and $\xi_2=(\xi_{2,1},\xi_{2,2},0)$, and the corresponding vertices of $\tau_h^*$ be $\psi_1= (\psi_{1,1},\psi_{1,2},\psi_{1,3})$ and $\psi_2= (\psi_{2,1},\psi_{2,2},\psi_{2,3})$.
	Then we have $\psi_k=\partial \Gamma \xi_k$ for $k=1,2$, which gives
	\[ \partial \Gamma = 
	\left(\begin{matrix} 
	\psi_{1,1}& \psi_{2,1}\\
	\psi_{1,2}& \psi_{2,2}\\
	\psi_{1,3}& \psi_{2,3}\\
	\end{matrix}\right)
	\left(\begin{matrix} 
	\xi_{1,1}& \xi_{2,1}\\
	\xi_{1,2}& \xi_{2,2}\\
	\end{matrix}\right)^{-1}.
	\]
It is not hard to see that 
	\begin{equation}\label{eq:est_Gamma1}
	\partial \Gamma -  \operatorname{Id}=\left( \left(\begin{matrix} 
	\psi_{1,1}& \psi_{2,1}\\
	\psi_{1,2}& \psi_{2,2}\\
	\psi_{1,3}& \psi_{2,3}\\
	\end{matrix}\right) - \left(\begin{matrix} 
	\xi_{1,1}& \xi_{2,1}\\
	\xi_{1,2}& \xi_{2,2}\\
	0 & 0
	\end{matrix}\right) \right)
	\left(\begin{matrix} 
	\xi_{1,1}& \xi_{2,1}\\
	\xi_{1,2}& \xi_{2,2}\\
	\end{matrix}\right)^{-1} .
	\end{equation}
	Because $(\xi_{i,j})_{i,j\in \set{1,2}}$ are all of order $h$, and $ (\abs{\xi_{1,1}\xi_{2,2}- \xi_{2,1}\xi_{1,2}})/2$ equals the area of $\tau_h \approx \mathcal{O}(h^2)$,     we can conclude that 
	\begin{equation}\label{eq:est_Gamma2}
	\left(\begin{matrix} 
	\xi_{1,1}& \xi_{2,1}\\
	\xi_{1,2}& \xi_{2,2}\\
	\end{matrix}\right)^{-1} = \left(\begin{matrix} 
	\xi_{2,2}& -\xi_{1,2}\\
	-\xi_{2,1}& \xi_{1,1}\\
	\end{matrix}\right)  (\abs{\xi_{1,1}\xi_{2,2}- \xi_{2,1}\xi_{1,2}})^{-1} = M/h, 
	\end{equation}
	where $M$ is some constant matrix independent of $h$.
	
	Recall that $\psi_1 $, $\psi_2$ are vertices of $\tau_h^*$, and $\xi_1$, $\xi_2$ are vertices of $\tau_h$ in the new coordinates. According to  the assumption \eqref{eq:close_condition1}, 
	we have 
	\begin{equation}\label{eq:h3_tangent}
	\abs{\psi_{i,j} -\xi_{i,j}}=\mathcal{O}(h^3) \; \text{ for } \quad i=1,2,\;\; j=1,2,
	\end{equation}
	and the normal part 
	\begin{equation}\label{eq:h2_normal}
	\abs{\psi_{i,3}}=\mathcal{O}(h^2) \text{ for } i=1,2.
	\end{equation}
Plugging  the above estimate  into \eqref{eq:est_Gamma1}, we obtain
	\[ \norm{\partial \Gamma -\operatorname{Id}}_\infty =\mathcal{O}(h).  \]
	Now we show the second inequality  in \eqref{eq:aux_equiv1}. Since $g_\Gamma = (\partial \Gamma)^{\top}\partial \Gamma$, it follows that
	\begin{equation}\label{eq:est_g}
	g_\Gamma -  \left(\begin{matrix} 
	1& 0\\
	0& 1\\
	\end{matrix}\right)= \left(\begin{matrix} 
	\xi_{1,1}& \xi_{1,2}\\
	\xi_{2,1}& \xi_{2,2}\\
	\end{matrix}\right)^{-T}
	\left( \left(\begin{matrix} 
	\psi_1\cdot \psi_1 & \psi_1\cdot \psi_2\\
	\psi_2\cdot \psi_1 & \psi_2\cdot \psi_2
	\end{matrix}\right) - \left(\begin{matrix} 
	\xi_1\cdot \xi_1 & \xi_1\cdot \xi_2\\
	\xi_2\cdot \xi_1 & \xi_2\cdot \xi_2
	\end{matrix}\right) \right)
	\left(\begin{matrix} 
	\xi_{1,1}& \xi_{2,1}\\
	\xi_{1,2}& \xi_{2,2}\\
	\end{matrix}\right)^{-1} .
	\end{equation}
	Note that $\psi_i\cdot \psi_i$ for $i=1,2$ are the squared length of the edges of $\tau_h^*$ and  $\xi_i\cdot \xi_i $ are squared length of the edges of $\tau_h$.
	Then, Lemma \ref{lem:aux_0} implies 
	\[\abs{\psi_i\cdot \psi_i - \xi_i\cdot \xi_i }  =\mathcal{O}(h^4), \text{ for } i=1,2.\]
For the mixed term, the relationship \eqref{eq:h3_tangent} and the triangle inequality implies that 
\begin{align*}
	\abs{\psi_i\cdot \psi_j - \xi_i\cdot \xi_j }
	=  \abs{\psi_i\cdot \psi_j - \psi_i \cdot \xi_j +   \psi_i \cdot \xi_j - \xi_i\cdot \xi_j }
	\le  \abs{\psi_i}\abs{ \psi_j -  \xi_j} +  \abs{ \psi_i  - \xi_i }\abs{\xi_j} 
	=\mathcal{O}(h^4),
\end{align*}
where we have used  the fact that the length of the edges are of $\mathcal{O}(h)$ in the last inequality.  Substituting the above estimates into \eqref{eq:est_g} completes the proof of the second result in \eqref{eq:aux_equiv1}.

	(iii) For \eqref{eq:aux_equiv0}, 
	notice that the additional condition in \eqref{eq:close_condition2} of normal direction error increases the order of $\abs{ \psi_{i,3}}$ in \eqref{eq:h2_normal}. That is
	\[ \abs{ \psi_{i,3}}=\mathcal{O}(h^3)  \quad \text{ for } i=1,2,\]
	which is  of the same order as $ \abs{ \psi_{i,j}-\xi_{i,j}}$ for $i,j=1,2$ in \eqref{eq:h3_tangent}.
	Thus we conclude the statement by comparing the proof of the first estimate in \eqref{eq:aux_equiv1}.
\end{proof}
Note that with only assumption \eqref{eq:close_condition1} and either the tangential condition in \eqref{eq:close_condition2} or the one in \eqref{eq:close_condition3} , it is already able to prove the metric condition \eqref{eq:aux_equiv2}. However, it is not sufficient to conclude the Jacobian condition \eqref{eq:aux_equiv0}, which is one of the essential ingredients  for the geometric supercloseness.

With the above preparation, we are ready to show that Assumption \ref{ass:irregular} gives us supercloseness of the geometric approximation.
\begin{proposition}
	\label{prop:geometry}
	Let $\manifold_h$  and $\manifold^*_h$ satisfy Assumption \ref{ass:irregular}, then we have
	\begin{itemize}
		\item[(i)] The triangulation $\manifold^*_h$ is also shape regular and quasi-uniform, and the $\mathcal{O}(h^{2\sigma})$ irregular condition is fulfilled for $\manifold^*_h$.
		\item[(ii)] The local piecewise linear parametrization functions, $\f r_{h,i} $ and $\f r^*_{h,i}$, satisfy
		\begin{equation}
		\label{eq:gs_condition}
		\norm{\partial \f r^*_{h,i}-\partial \f r_{h,i}}_{\infty,\Omega_i} = \sqrt{\mathcal{A}(\manifold_{h,i}) }\mathcal{O}(h^{2}),\;\; \text{ for all } \; i\in I_h. 
		\end{equation}
		Here   $\mathcal{A}$ is the area functional, and $\manifold_{h,i}\subset \manifold_h$ is the patch corresponding to the parameter domain $\Omega_i$. To simplify the discussion, we choose the same $\Omega_i $ for both $\manifold_{h,i}$ and $\manifold_{h,i}^*$, which is obtained as in Algorithm \ref{alg:isop_gradient}.
	\end{itemize}
\end{proposition}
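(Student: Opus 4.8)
The plan is to establish both claims by transporting the geometric quantities from $\manifold_h$ to $\manifold^*_h$ element by element, keeping the very same combinatorial structure (the same vertex/triangle correspondence and the same mesh partition) and using Lemma \ref{lem:aux_0}, Lemma \ref{lem:aux1}, and Proposition \ref{prop:aux2} as the quantitative engine. The guiding principle is that all relevant geometric data of each pair $(\tau_{h,j},\tau^*_{h,j})$ — edge lengths, areas, and edge directions — agree up to a perturbation of order strictly higher than the scale at which shape regularity, quasi-uniformity, and the parallelogram/irregular conditions are measured.

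For part (i) I would first treat shape regularity and quasi-uniformity. Since corresponding edge lengths satisfy $\abs{l_{2,k}-l_{1,k}}=\mathcal{O}(h^3)$ by Lemma \ref{lem:aux_0} while every edge is of order $h$, the diameters of the $\tau^*_{h,j}$ differ from those of the $\tau_{h,j}$ only at order $h^3$; hence the $\max/\min$ ratio of diameters over $\manifold^*_h$ stays uniformly bounded, giving quasi-uniformity. Shape regularity then follows from Lemma \ref{lem:aux1}: the area ratio obeys $\mathcal{A}(\tau_2)/\mathcal{A}(\tau_1)=1+\mathcal{O}(h^2)$, so areas remain bounded below by a constant multiple of $h^2$ as on $\manifold_h$, ruling out degeneracy. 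For the $\mathcal{O}(h^{2\sigma})$ irregular condition I would keep the same partition $\mathcal{T}_{h,1}\cup\mathcal{T}_{h,2}$ inherited from $\manifold_h$. The crucial point is that the parallelogram defect is a second-order quantity defined through opposite edge \emph{vectors}, whereas conditions \eqref{eq:close_condition2} force both tangential and normal components of the transported vertex differences to be $\mathcal{O}(h^3)$; consequently corresponding edge vectors of $\manifold_h$ and $\manifold^*_h$ coincide up to $\mathcal{O}(h^3)$, which is negligible against the $\mathcal{O}(h^2)$ tolerance, so every $\mathcal{O}(h^2)$ parallelogram in $\mathcal{T}_{h,1}$ maps to an $\mathcal{O}(h^2)$ parallelogram on $\manifold^*_h$. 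Finally $\sum_{\tau^*_{h,j}\subset\mathcal{T}_{h,2}}\abs{\tau^*_{h,j}}=\sum_{\tau_{h,j}\subset\mathcal{T}_{h,2}}\abs{\tau_{h,j}}(1+\mathcal{O}(h^2))=\mathcal{O}(h^{2\sigma})$ again by Lemma \ref{lem:aux1}, closing part (i).

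For part (ii) I would argue elementwise over the triangulation of $\Omega_i$. On each element $T$ both $\f r_{h,i}$ and $\f r^*_{h,i}$ are affine, so their Jacobians are constant $3\times 2$ matrices; taking the image triangle as parametric domain and letting $\Gamma:\tau^*_{h,j}\to\tau_{h,j}$ be the linear map of Proposition \ref{prop:aux2}, the chain rule identifies the Jacobian difference on $T$ with $\partial\Gamma-\operatorname{Id}$ up to composition with the fixed, $h$-independent Jacobian of the chart relating $\Omega_i$ to the flattened $\tau^*_{h,j}$, so Proposition \ref{prop:aux2}(iii) gives the pointwise bound $\norm{\partial\f r_{h,i}-\partial\f r^*_{h,i}}_{\infty,T}=\mathcal{O}(h^2)$. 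Squaring and summing yields $\norm{\partial\f r_{h,i}-\partial\f r^*_{h,i}}_{0,\Omega_i}^2=\sum_{T\subset\Omega_i}\abs{T}\,\mathcal{O}(h^4)=\abs{\Omega_i}\,\mathcal{O}(h^4)$, and since the parametrization is a near isometry with $\sqrt{\det g_\Gamma}=1+\mathcal{O}(h^2)$ by Proposition \ref{prop:aux2}(i), the parameter area $\abs{\Omega_i}$ is comparable to $\mathcal{A}(\manifold^*_{h,i})$; taking the square root then delivers exactly \eqref{eq:gs_condition}.

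The main obstacle, in my view, is the transfer of the irregular/parallelogram condition in part (i): shape regularity, quasi-uniformity, and the area-sum estimate follow almost mechanically from the edge- and area-level lemmas, but the parallelogram condition is genuinely directional and cannot be controlled by edge lengths alone. It is precisely here that the refined hypothesis \eqref{eq:close_condition2} — the $\mathcal{O}(h^3)$ bound on both tangential and normal projections of the transported vertex differences, rather than the crude $\mathcal{O}(h^2)$ vertex deviation \eqref{eq:close_condition1} — is indispensable, mirroring why \eqref{eq:close_condition2} was needed to upgrade \eqref{eq:aux_equiv1} to the sharp Jacobian estimate \eqref{eq:aux_equiv0}. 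A secondary care point is justifying the comparability of $\abs{\Omega_i}$ and $\mathcal{A}(\manifold^*_{h,i})$, which depends on the specific construction of $\Omega_i$ in Algorithm \ref{alg:isop_gradient} together with the near-isometry furnished by Proposition \ref{prop:aux2}(i).
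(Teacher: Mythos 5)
Your proposal is correct and takes essentially the same approach as the paper: part (ii) is the paper's argument almost verbatim (elementwise constant Jacobians over $\Omega_i$, identification of the difference with $\partial \Gamma - \operatorname{Id}$ via the bounded change of coordinates, Proposition \ref{prop:aux2}(iii), then summation over elements and a square root), and part (i) is the same perturbation/triangle-inequality transfer of shape regularity, quasi-uniformity, and the $\mathcal{O}(h^{2\sigma})$ irregular condition from $\manifold_h$ to $\manifold_h^*$ that the paper asserts in one line. Your explicit treatment of the parallelogram condition via the $\mathcal{O}(h^3)$ edge-vector agreement forced by \eqref{eq:close_condition2} is in fact a more detailed justification than the paper provides.
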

\begin{proof}
	The first assertion holds because the triangles of $\manifold_{h}$ are shape regular and quasi-uniform, and they satisfy the $\mathcal{O}(h^{2\sigma})$ irregular condition above.
	The definition of shape regular and quasi-uniform can be found in many textbooks of finite element methods \cite{BS2008, Ci2002}.
	Using the triangle inequality, we get the conclusion.
	
	Now we proceed to  prove the second assertion.
	Let $\Omega_i$ be the parameter domain for patches selected around the vertex $x_{h,i}$. Denote the local index-set associated to vertices of the selected patch around $x_{h,i}$ to be $J_i$. We notice that both $\f r_{h,i}$ and $\f r^*_{h,i}$ are piecewise linear functions defined on $\Omega_i$. 
	Let $\Omega_{i,j}$ be the common parameter domain for the corresponding triangle pairs $\tau_{h,j}$ and $\tau^*_{h,j}$ for the index $j\in J_i$.
	Then  $\f r_{h,i}$ and $\f r^*_{h,i}$ are affine functions on each of the triangle regions $\Omega_{i,j}$ for every $j\in J_i$. Note that due to the construction of $\Omega_i$ (see Algorithm \ref{alg:isop_gradient}), the change of coordinates from $\Omega_{i,j}$ to $\tau_{h,j}$ or $\tau^*_{h,j}$ is done by affine transformation.
	
	Since the Jacobian $\partial \f r_{h,i}$ and $\partial \f r^*_{h,i}$ are constant functions,  we have
	\begin{equation*}
	\begin{aligned}
	\norm{\partial \f r^*_{h,i} -\partial \f r_{h,i}}_{\infty,\Omega_{i,j}}
	& \leq   \left(\sup_{i,j}{\sqrt{\frac{\mathcal{A}(\Omega_{i,j})}{\mathcal{A}(\tau_{h,j})}}}\right) \norm{\partial R_{i,j}} \norm{\partial \Gamma_{j} -\operatorname{Id}}_{\infty,\tau^*_{h,j}}\\
	& \leq \norm{\partial \Gamma_{j} -\operatorname{Id}}_{\infty,\tau^*_{h,j}}.
	\end{aligned}
	\end{equation*} 
	Here $\partial R_{i,j}$ is a $3\times 3$ matrix to change coordinates from $\tau_{h,j}$ to $\Omega_{i,j}$, which is a unitary matrix, and thus $\norm{\partial R_{i,j}}=1$; $\Gamma_{j} $ is the geometric mapping lift $\tau_{h,j}$ to $\tau^*_{h,j}$, which can be obtained from $\f r_{h,i}$ by changing the local coordinates from $\Omega_{i,j}$ to $\tau_{h,j}$, while the mapping $\f r_{h,i}$ becomes identity after the change of coordinates.
	Due to the shape regularity of $\manifold_h$ and $\manifold^*_h$, and $\manifold$ has bounded curvature, thus all the elements of $\set{\sqrt{\frac{\mathcal{A}(\Omega_{i,j})}{\mathcal{A}(\tau_{h,j})}}}_{i\in I_h, j\in J_h}$ are uniformly bounded from below and from above on all the parametric domain $\set{\Omega_i}_{i\in I_h}$ and the triangles $\set{\tau_{h,j}}_{j\in J_h}$. Typically, we have $\sup_{i,j}\sqrt{\frac{\mathcal{A}(\Omega_{i,j})}{\mathcal{A}(\tau_{h,j})}}\leq 1$ due to the fact that $\Omega_{i,j}$ is the projection of $\tau_{h,j}$ onto $\Omega_i$.
	
	Applying  the result from Proposition \ref{prop:aux2}, we have
	\[\norm{\partial \f r_{h,i} - \partial \f r^*_{h,i} }_{\infty,\Omega_{i,j}}\leq \norm{\partial \Gamma_{j} - \operatorname{Id}}_{\infty,\tau_{h,j}} \leq C\sqrt{\mathcal{A}(\tau_{h,j}) }h^2. \]
	Summing up over $j\in J_i$,  we arrive at the following inequality
	\begin{equation*}
	\norm{\partial \f r_{h,i} - \partial \f r^*_{h,i} }_{\infty,\Omega_{i}}^2
	=\mathcal{O}(h^4) \sum_{j\in J_i}\mathcal{A}(\tau_{h,j}) .
	\end{equation*}
	Taking square root on both sides gives the conclusion of the second assertion.
\end{proof}

The relation in \eqref{eq:gs_condition} tells some regularity on the approximations of $\manifold_h$ to $\manifold$.
It is similar to the supercloseness property for the gradient of the finite element solutions to the gradient of the interpolation of the exact solutions \cite{BankXu2003,XuZhang2004}.
This  leads to the superconvergence of the differential structure on deviated surfaces.
In what follows, we present why the superconvergence of the differential structure is crucial for the superconvergence of the gradient recovery on perturbed surfaces.

\section{Superconvergent gradient recovery on deviated surfaces}
\label{sec:app_1}
\subsection{Parametric gradient recovery schemes on surfaces}
Here, we generalize the idea of parametric polynomial preserving recovery proposed in \cite{DonGuo20} to have a general family of recovery methods in surfaces setting. More precisely, the algorithmic
 framework  generalizes  the methods introduced in \cite{WeiChenHuang2010}, which ask for exact geometry-prior, to the case without  exact geometry-prior.
To do this, we use the intrinsic definition of gradient operator on surfaces. Given a local parametric patch, and $\f r:\Omega \to \manifold$ the parametrization function of this patch, define $\bar{u}:=u\circ \f r$. Then we have
\begin{equation}
\label{eq:local_gradient}
(\nabla_g u)\circ \f r=\nabla \bar{u}  (g\circ \f r)^{-1} \partial \f r  = \nabla \bar{u}( \partial \f r )^\dag \quad \text{ on } \Omega.
\end{equation}
Here $( \partial \f r )^\dag$ is the pseudo-inverse of $\partial \f r$ the Jacobian of the geometric mapping over $\Omega$.
For a small digest of differential operator on surfaces, we refer to the appendix of \cite{DonJueSchTak17} and also the background part in \cite{DonGuo20}. One may refer to the textbooks, e.g., \cite{doCarmo1992,Lee12} for more comprehensive introduction on differential geometry.
Inspired from  \eqref{eq:local_gradient}, we try to recover the gradient on surfaces using a two-level strategy. That is to recover the Jacobian $\partial \f r$ and also $\nabla \bar{u}$ iso-parametrically on every local patch.
We call this family of methods the isoparametric gradient recovery schemes.
We shall see that they ask for neither the exact vertices nor the precise tangent spaces.
In particular the PPPR method which was introduced in \cite{DonGuo20} can also be included into this framework.

\begin{algorithm}\label{alg:isop_gradient}
	\caption{Isoparametric gradient recovery schemes}
	\label{alg:isop_gradient}
	Input: Discretized triangular surface $\manifold_h$ with vertices set $(x_{h,i})_{i\in I_h}$ and the data (FEM solutions) $(u_{h,i})_{i\in I_h}$.
	Then we suggest two-stage recovery steps for all $i\in I_h$.
	
	$\bullet$ Geometric recovery:
	\begin{itemize}
		\item[(1)] At every $x_{h,i}$, select a neighboring patch $\manifold_{h,i}\in \manifold_h$ around $x_{h,i}$ with sufficient vertices. Compute the unit normal vectors of every triangle faces in $\manifold_{h,i}$. Compute the simple (weighted) averaging of the unit normal vectors, and normalize it to be $\phi^{3}_i$. 
		Take the orthogonal space to $\phi^{3}_i$ to be the parametric domain $\Omega_i$. Shift $x_{h,i}$ to be the origin of $\Omega_i$, and choose $(\phi^1_i,\phi^2_i)$ the orthonormal basis of $\Omega_i$. 
		
		\item[(2)] Project all selected vertices of $\manifold_{h,i}$ around $x_{h,i}$ onto the parametric domain $\Omega_i$ from Step $(1)$, and record the new coordinates as $\zeta_{i_j}$. 
		
		\item[(3)] Use a planar recovery scheme $R^k_h$ to recover the surface Jacobian with respect to $\Omega_i$.
		Typically, this is considered every surface patch as local graph of some function $s$, that is $\f r_i=(\Omega_i,s_i(\Omega_i))$. Then the recovered Jacobian at the selected patch is $J_{r,i}=(\operatorname{I}, R^k_hs_{i,j})^\top$, where $\operatorname{I}$ is the identity matrix according to the dimension of $\Omega_i$.
	\end{itemize}	
	
		$\bullet$ Function gradient recovery:
	\begin{itemize}	
		\item[(4)] Let $(\bar{u}_{h,j})_{j\in I_i}$ be the values of $(u_{h,j})_{j\in I_i}$ associated to vertices of $\manifold_{h,i}$ isoparametrically defined on the parameter domain $\Omega_i$. Then using the same planar recovery scheme $R^k_h$ to recover gradient from $(\bar{u}_{h,j})_{j\in I_i}$  with respect to parameter domain $\Omega_i$.
		
		\item[(5)] In the spirit of \eqref{eq:local_gradient}, use the results from Step $(3)$ and Step $(4)$ to get the recovered surface gradient at $x_{h,i}$: 
		\begin{equation}
		\label{eq:gradient_pgr}
		G^k_h u_{h,i}  = R^k_h \bar{u}_{h,i}
		(J_{r,i})^\dag(\phi^1_i, \phi^{2}_i, \phi^{3}_i),
		\end{equation}
		where $(J_{r,i})^\dag=(J_{r,i} J_{r,i}^\top)^{-1}J_{r,i}$. 
		The orthonormal basis $\set{\phi^1_i, \phi^{2}_i, \phi^{3}_i}$ is multiplied to unify the coordinates from local ones to a global one in the ambient Euclidean space.
	\end{itemize}
	Output: The recovered gradient at selected nodes $\set{G^k_h u_{h,i} }_{i\in I_h}$. For $x$ being not a vertex of triangles, we use linear finite element basis to interpolate the values $\set{G_{h}  u_{h,i}}_{i\in I_h}$ at vertices of each triangle.
\end{algorithm}

Algorithm \ref{alg:isop_gradient} consists of two main parts, and describes a family of recovery methods as the planar recovery scheme can be varied. The generality should also cover higher dimensional problems, but for simplicity, we focus on $2$-dimensional case only.
For selecting a concrete planar recovery method in both Step $(3)$ and Step $(4)$, actually, almost all the local recovery methods for functions in the Euclidean domain can be applied. 
This will include many of the methods which have been discussed in \cite{WeiChenHuang2010}. Though the preferred candidates will be $ZZ$-scheme and PPR. The latter gives then the PPPR method proposed in \cite{DonGuo20}.
In view of \eqref{eq:gradient_pgr}, one can see clearly that the proposed scheme is an approximation of \eqref{eq:local_gradient} at every nodes:  $R^k_h \bar{u}_{h,i}$ recovers $\nabla \bar{u}$, and  $ (J_{r,i})^\dag$ recovers $( \partial \f r )^\dag$.
Moreover, it gives the intuition that why the result in \eqref{eq:gs_condition} is required, in order to match the superconvergence of the function gradient recovery and the superconvergence of the geometric structures simultaneously.
Next, we prove the superconvergence property of the recovery scheme.

\subsection{Superconvergence analysis on deviated geometry}
Even though a general algorithmic framework is described, which can cover several different methods under the same umbrella,  we will focus on parametric polynomial preserving recovery (PPPR) scheme for the theoretical analysis. 
The reason is that PPPR asks for minimum requirements on the meshes in comparison with several other methods. For instance, the simple (weighted) average method, or the generalized ZZ-scheme, both of which require an additional $\mathcal{O}(h^2)$-symmetric condition on the meshes.
However, the general idea of the proof should be able to cover these methods giving the necessary mesh conditions.

We take the following Laplace-Beltrami equation  as our model problem to conduct the analysis:
\begin{equation}
\label{eq:laplace}
-\lapla_g u= f \quad \text{ where } \; \int_{\manifold} u \; dvol =0.
\end{equation}
The weak formulation of equation \eqref{eq:laplace} is given as follows: Find $u\in H^1(\manifold)$ with $\int_{\manifold} u \; dvol =0$  such that
\begin{equation}
\label{eq:var_laplace}
\int_{\manifold} \nabla_g u \cdot \nabla_g v \; dvol=\int_{\manifold} f v \; dvol, \quad  \forall v\in H^1(\manifold).
\end{equation}
The regularity of the solution has been proven in \cite[Chapter 4]{Aubin1982}. In the surface finite element method, the surface $\manifold$ is approximated by the triangulation $\manifold_h$ which satisfy Assumption \ref{ass:irregular},
and the solution is simulated in the continuous piecewise linear function space $\mathcal{V}_h$ defined over $\manifold_h$, i.e. 
\begin{equation}
\label{eq:d_var_laplace}
\int_{\manifold_h} \nabla_{g_h} u_h \cdot \nabla_{g_h}  v_h \; dvol_h=\int_{\manifold_h} f_h v_h \; dvol_h, \quad  \forall v_h\in \mathcal{V}_h(\manifold_h).
\end{equation}

We first show that there exists an underlying smooth surface denoted by $\widetilde{\manifold}_h$ so that $\manifold_h$ can be thought as an interpolation of it.
This intermediate surface is not needed practically in the algorithm, but it is helpful for our error analysis.
For such purpose, we recall a fundamental result in differential topology attributed to Whitney (see, e.g., \cite[Theorem 6.21]{Lee12}).
\begin{theorem}[Whitney Approximation Theorem]\label{thm:Whitney} 
Suppose $\mathcal{M}$ is a smooth surface with or without boundary, and $F:\mathcal{M}\to \R^{d+1}$ is a continuous
map. Given any positive continuous function $\epsilon:\mathcal{M}\to \R^+$, there exists a smooth
function $F_\epsilon:\mathcal{M}\to \R^{d+1}$ that is $\epsilon$-close to $F$. If $F$ is smooth on a closed subset $A\subset \mathcal{M}$, then $F_\epsilon$ can be chosen to be equal to $F$ on $A$.
\end{theorem}
Here $\epsilon$-close means $\abs{F(p)-F_\epsilon(p)}\leq \epsilon(p)$  for all $p\in \mathcal{M}$. Since $\epsilon$ is any positive function, and it can be arbitrarily close to $0$. $F$ and $F_\epsilon$ can be considered as mappings between $\mathcal{M}$ and other geometric objectives embedded in $\R^{d+1}$.

\begin{proposition}
	\label{prop:appr_surfaces}
	Let $\manifold$ be the precise surface, and $\manifold_h$ and $\manifold_h^*$ satisfy the assumption \ref{ass:irregular}.
	Then the following statements hold true:
	\begin{itemize}
		\item[(i)]	There exists a $C^3$ (in fact $C^\infty$) smooth surface $\widetilde{\manifold}_h$, so that $\manifold_h$ is a linear interpolation of $\widetilde{\manifold}_h$ at the vertices. 
		\item[(ii)]  Let $\f r_{\tau_{h,j}}$ and $\tilde{\f r}_{\tau_{h,j}}$ be the parametrization of the curved triangular surfaces $\tau_{j}\subset \manifold $ and $\tilde{\tau}_{h,j} \subset \widetilde{\manifold}_h$ from the triangle $\tau_{h,j}$, respectively. Then there is the estimate
		\begin{equation}\label{eq:geometric_error2}
		\norm{ \partial \tilde{\f r}_{\tau_{h,j}} - \partial \f r_{\tau_{h,j}} }_{\infty,\tau_{h,j}}  \leq C h^2 ,
		\end{equation}
		where $C$ is a constant independent of $h$.
		\item[(iii)] Let $v:\manifold \to \R$ be functions in $W^{k,p}(\manifold)$,
		and $\tilde{v}_h$ be the pullback of $v$ to $\widetilde{\manifold}_h$, then we have 
		\begin{equation}\label{eq:norm_equ}
		C_1 \norm{\tilde{v}_h}_{W^{k,p}(\widetilde{\manifold}_h)} \leq \norm{v}_{W^{k,p}(\manifold)} \leq C_2 \norm{\tilde{v}_h}_{W^{k,p}(\widetilde{\manifold}_h)},
		\end{equation}
		for some constants $0<C_1 \leq C_2$.
	\end{itemize}
\end{proposition}
\begin{proof}
	$(i)$ For the first statement, we construct in the following the smooth surface $\widetilde{\manifold}_h$ which is needed for the error analysis.
	\begin{itemize}
 \item  We construct a piece-wise cubic polynomial patch on $\manifold^*_h$. These polynomial patches are able to preserve the surface Jacobian at the vertices over the parametric triangles $\set{\tau^*_{h,j}}_{j\in J_h}$. To have this, we subdivide $\manifold$ by $\set{\tau_j}_{j\in J_h}$ which are parametrized by $\set{\tau^*_{h,j}}_{j\in J_h}$, i.e. $\tau_j=\f r_{\tau_{h,j}}^*(\tau^*_{h,j})$, where $\f r_{\tau_{h,j}}^*=(\gamma_1,\gamma_2,\gamma_3)^\top$.
 Then we reconstruct a cubic polynomial by using the value of $\gamma_k$ and the directional derivatives of $\gamma_k$ along the edges of $\tau^*_{h,j}$ for $k=1,2,3$ at the three vertices, and also the value of $\gamma_k$ at the barycenter of $\tau^*_{h,j}$. With these 10 linear independent equations for each $\gamma_k$, we reconstruct three cubic polynomial functions $\gamma_{h,k}$ which approximate $\gamma_k$ on every $\tau^*_{h,j}$, respectively. This reconstruction is done for every $j\in J_h$, then we have a closed surface which consists of piece-wise cubic polynomial patches, we denote it to be $\overline{\manifold}^*_h$. The closeness can be judged by the uniqueness of the cubic polynomial functions given the same interpolation condition (after unifying the coordinates) over every edge of $\tau^*_{h,j}$.
\item  We turn to $\manifold_h$ for a similar reconstruction. This can be done easily now.
  For every triangle $\tau_{h,j}$ with $j\in J_h$, we define an affine mapping $\Gamma_j: \tau_{h,j} \to \tau^*_{h,j}$. Note that polynomials composed with affine mappings are still polynomials. Therefore we can have the polynomial patches over $\tau_{h,j}$ by composing the previously reconstructed polynomial function on every $\tau^*_{h,j}$ with the corresponding $\Gamma_j$ function. This then gives another piece-wise cubic polynomial patches which we denote by $\overline{\manifold}_h$. It is again closed due to the interpolation property of the affine mapping on every edge  $\tau_{h,j}$, that is a unique affine map  which results a unique cubic polynomial with the composition on every edge. Therefore $\overline{\manifold}_h$ is also a closed piece-wise cubic polynomial surface.
	\end{itemize}
The above construction of $\overline{\manifold}_h$ shows that it is a $C^0$ surface.
Now we apply Whitney Approximation Theorem \ref{thm:Whitney} to have a smooth approximation of $\overline{\manifold}_h$. For this we define the map $F:\manifold \to \overline{\manifold}_h\subset \R^3$ which is a $C^0$ isomorphism, i.e. it gives a triangulation of $\manifold$ such that every triangle on $\overline{\manifold}_h$ has a unique triangle on $\manifold$ as the pre-image of $F$. All these triangles are non-intersect and give a partition of $\manifold$. Then there exists compatible $C^k$ (for any $k\in [1,\infty]$) smooth approximation which can be arbitrarily close to $F$, i.e., for all $\epsilon>0$ there exists a family of $C^\infty$ isomorphism $F_\epsilon:\manifold \to  \widetilde{\manifold}^\epsilon_h$ which approximates $F$. For every fixed $h$, we choose a open set denoted by $E_\delta\subset \manifold$ which consists of the narrow band in the neighbourhood of all the edges of the curved triangles on $\manifold$. Here $\delta$ denotes the width of every band, and note that $\delta$ can be arbitrarily small. Since $\overline{\manifold}_h$ is buildup by piece-wise polynomial patches, $F$ is $C^\infty$ smooth in $\manifold \backslash E_\delta$.  Then from Theorem \ref{thm:Whitney} we have that $F_\epsilon=F$ over the set $\manifold \backslash E_\delta$. In fact we can choose $\delta=\epsilon$.
As $\epsilon$ can be arbitrarily close to $0$, and we use the notation $\widetilde{\manifold}_h$ for the smoothed version of $\overline{\manifold}_h$.
Note that when $\epsilon$ approaching $0$, all the vertices on $\manifold_{h}$ are almost located on the smoothed surface $\widetilde{\manifold}_h$, and $F=F_\epsilon$ almost everywhere as the measure of $E_\delta$ is vanishing when $\epsilon\to 0$. 
	
	$(ii)$ For the second statement, we notice the following relation:
	\begin{equation}\label{eq:prop1}
	\begin{aligned}
	\norm{\partial \f r_{\tau_{h,j}} -  \partial \tilde{\f r}_{\tau_{h,j}}}_{\infty,\tau_{h,j}} \leq &\norm{\partial \f r_{\tau_{h,j}} - \bar{G}_h \Gamma^*_{j}  }_{\infty,\tau_{h,j}} + \norm{ \bar{G}_h  \Gamma^*_{j} -  \bar{G}_h \Gamma_{j}}_{\infty,\tau_{h,j}}\\
	& + \norm{ \bar{G}_h \Gamma_{j} -  \partial \tilde{\f r}_{\tau_{h,j}}}_{\infty,\tau_{h,j}}. 
	\end{aligned}
	\end{equation}
	Here we take $\tau_{h,j}$ the parameter domain for both $\f r_{\tau_{h,j}} $ and $\tilde{\f r}_{\tau_{h,j}}$.
	$   \Gamma^*_{j}$ and $   \Gamma_{j}$  are the local geometric transformations which are obtained from the linear interpolations of the geometric mappings $ \f r_{\tau_{h,j}} $ and $\tilde{\f r}_{\tau_{h,j}}$ at the vertices of $\tau_{h,j}$, respectively. Note that here we always chose $\tau_{h,j}$ to be the parameter domain, therefore $\Gamma_j \equiv \operatorname{Id}$ for all $j\in J_h$. $\bar{G}_h$ is the local PPR gradient recovery operator.
	
	The first and the third terms on the right-hand side of \eqref{eq:prop1} can be estimated using polynomial preserving properties of $\bar{G}_h$ and the smoothness of the functions $\f r_{\tau_{h,j}}$ and $\tilde{\f r}_{\tau_{h,j}}$ defined on every triangle $\tau_{h,j}$, which gives
	\begin{equation}\label{eq:prop2}
	\norm{\partial \f r_{\tau_{h,j}} -  \bar{G}_h \Gamma^*_{j} }_{\infty,\tau_{h,j}} \leq c_1 \norm{\f r_{\tau_{h,j}} }_{W^{3,\infty}(\tau_{h,j})} h^2,\quad
	\norm{ \bar{G}_h \Gamma_{j} -  \partial \tilde{\f r}_{\tau_{h,j}}}_{\infty,\tau_{h,j}}
	\leq c_2 \norm{ \tilde{\f r}_{\tau_{h,j}}}_{W^{3,\infty}(\tau_{h,j})} h^2 .
	\end{equation}
	Here $\bar{G}_h$ is realized using planar gradient recovery schemes componentwisely to $\f r_{\tau_{h,j}}$ (e.g., polynomial preserving or patch recovery methods) based on the local coordinates on $\tau_{h,j}$. 
	The second term on the right-hand side of \eqref{eq:prop1} can be estimated from Proposition \ref{prop:aux2} and the boundedness result of the planar recovery operator $ \bar{G}_h$ \cite[Theorem 3.2]{NagaZhang2004}: 
	\begin{equation}\label{eq:prop3}
	\norm{  \bar{G}_h \Gamma^*_{j}   -  \bar{G}_h \Gamma_{j}   }_{\infty,\tau_{h,j}} \leq C \norm{  \partial \Gamma^*_{j}   -  \partial \Gamma_{j}   }_{\infty,\tau_{h,j}}  \leq  c_3 h^2. 
	\end{equation}
	Since $ \norm{\f r_{\tau_{h,j}} }_{3,\tau_{h,j}} $ and $\norm{ \tilde{\f r}_{\tau_{h,j}}}_{3,\tau_{h,j}}$ both are uniformly bounded, combining \eqref{eq:prop2} and \eqref{eq:prop3}, we return to \eqref{eq:prop1} to have the estimate
	\[  \norm{\partial \f r_{\tau_{h,j}} -  \partial \tilde{\f r}_{\tau_{h,j}} }_{\infty,\tau_{h,j}}  = \mathcal{O}(h^2). \]
	
	$(iii)$ For the equivalence \eqref{eq:norm_equ} we can use the results in \cite[page 811]{Demlow2009}, which is helpful to show the equivalence on each triangle pairs of $\manifold_h$ and $\widetilde{\manifold}_h$, that is
	\[c_{j,1} \norm{\tilde{v}_h}_{W^{k,p}(\widetilde{\tau}_{h,j})} \leq \norm{v_h}_{W^{k,p}(\tau_{h,j})} \leq c_{j,2} \norm{\tilde{v}_h}_{W^{k,p}(\widetilde{\tau}_{h,j})},\]
	for some constants $\set{c_{j,1}}_{j\in J_h}>0$ and $\set{c_{j,2}}_{j\in J_h}>0$.
	The equivalence for functions defined on triangle pairs of $\tau_j$ and $\tau_{h,j}$ is similarly shown.
	Then we arrive the following
	\[\tilde{c}_{j,1} \norm{\tilde{v}_h}_{W^{k,p}(\widetilde{\tau}_{h,j})} \leq \norm{v}_{W^{k,p}(\tau_{j})} \leq \tilde{c}_{j,2} \norm{\tilde{v}_h}_{W^{k,p}(\widetilde{\tau}_{h,j})},\;\]
	with constants $\set{\tilde{c}_{j,1}}_{j\in J_h}>0$ and $\set{\tilde{c}_{j,2}}_{j\in J_h}>0$.
	Since $\manifold_h \to \manifold$ as $h\to 0$, we have $\widetilde{\manifold}_h \to \manifold$ as well.
	This tells that $\tilde{c}_{j,1} ,\; \tilde{c}_{j,2} \to 1$ as  $h\to 0$, which indicates that the constants $\set{\tilde{c}_{j,1} }_{j\in J_h}$ and $\set{\tilde{c}_{j,2} }_{j\in J_h}$ are uniformly bounded. Then we derive the equivalence in \eqref{eq:norm_equ}.
\end{proof}

In order to prove the superconvergence in the case when the vertices of $\manifold_h$ are not located exactly on $\manifold$, but in a $h^2$-neighborhood around it, we use the following estimate.
\begin{lemma}
	\label{lem:surface_closeness}
	Let Assumption \ref{ass:irregular} hold, and let $\widetilde{\manifold}_h$ be constructed from Proposition \ref{prop:appr_surfaces}. 
	Let $v\in W^{3,\infty}(\manifold) $, and let $\tilde{v}_h: =\tilde{T}_h v $ be pullback of $v$ from $\manifold$ to $\widetilde{\manifold}_h$:  $\tilde{v}_h(\tilde{\f r}_{h,i}(\zeta)) =v(\f r_i(\zeta))$ for every $\zeta \in \Omega_i$ and all $i\in I_h$, where $\tilde{\f r}_{h,i}:\Omega_i \to \widetilde{\manifold}_{h,i}$ and $\f r_i:\Omega_i \to \manifold_i$.
	Then the following estimate holds:
	\begin{equation}
	\label{eq:inter_estimate1}
	\norm{\nabla_g v -  (\tilde{T}_h)^{-1} \nabla_{\tilde{g}_h}\tilde{v}_h }_{0,\manifold}  \lesssim  h^2  \norm{\nabla_g v  }_{0,\manifold} .
	\end{equation} 
\end{lemma}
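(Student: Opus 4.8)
The plan is to exploit the fact that the pullback $\tilde v_h=\tilde T_h v$ is defined so that $v$ and $\tilde v_h$ share \emph{the same} coordinate representation in the parameter domain; consequently the two surface gradients differ only through the pseudo-inverses of the geometric Jacobians, and that difference is precisely what Proposition \ref{prop:appr_manifolds}(ii) controls at order $h^2$. I would work triangle-by-triangle, localizing on each curved triangle $\tau_j\subset\manifold$ together with its partner $\tilde\tau_{h,j}\subset\widetilde{\manifold}_h$, using the common flat triangle $\tau_{h,j}$ as parameter domain via $\f r_{\tau_{h,j}}:\tau_{h,j}\to\tau_j$ and $\tilde{\f r}_{\tau_{h,j}}:\tau_{h,j}\to\tilde\tau_{h,j}$; this matches the setting in which \eqref{eq:geometric_error2} is stated.

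Writing $\bar v:=v\circ\f r_{\tau_{h,j}}$, the definition of the pullback gives $\tilde v_h\circ\tilde{\f r}_{\tau_{h,j}}=\bar v$ as well, so formula \eqref{eq:local_gradient} yields, at corresponding points $p=\f r_{\tau_{h,j}}(\zeta)$ and $\tilde p=\tilde{\f r}_{\tau_{h,j}}(\zeta)$,
\[
(\nabla_g v)\circ\f r_{\tau_{h,j}}=\nabla\bar v\,(\partial\f r_{\tau_{h,j}})^\dag,\qquad
(\nabla_{\tilde g_h}\tilde v_h)\circ\tilde{\f r}_{\tau_{h,j}}=\nabla\bar v\,(\partial\tilde{\f r}_{\tau_{h,j}})^\dag.
\]
Since $(\tilde T_h)^{-1}$ transports the value at $\tilde p$ back to $p$, the integrand of the left-hand side of \eqref{eq:inter_estimate1} becomes the common factor $\nabla\bar v$ times a difference of pseudo-inverses, namely $\nabla\bar v\big[(\partial\f r_{\tau_{h,j}})^\dag-(\partial\tilde{\f r}_{\tau_{h,j}})^\dag\big]$.

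Next I would turn this into a bound in terms of $\nabla_g v$ itself. Right-multiplying the first identity above by $\partial\f r_{\tau_{h,j}}$ and using $(\partial\f r)^\dag\partial\f r=\operatorname{I}$ gives $\nabla\bar v=\big((\nabla_g v)\circ\f r_{\tau_{h,j}}\big)\,\partial\f r_{\tau_{h,j}}$, hence $|\nabla\bar v|\le\|\partial\f r_{\tau_{h,j}}\|\,|(\nabla_g v)\circ\f r_{\tau_{h,j}}|$ with $\|\partial\f r_{\tau_{h,j}}\|$ uniformly bounded by the $C^3$-smoothness of $\manifold$. The difference of pseudo-inverses is handled by the standard perturbation estimate $\|A^\dag-B^\dag\|\le C\|A-B\|$, valid uniformly because both Jacobians are full rank with singular values bounded away from $0$ and $\infty$ (shape regularity in Assumption \ref{ass:irregular} together with bounded curvature); combined with \eqref{eq:geometric_error2} this gives $\|(\partial\f r_{\tau_{h,j}})^\dag-(\partial\tilde{\f r}_{\tau_{h,j}})^\dag\|\le C h^2$. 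Therefore pointwise $|\nabla_g v-(\tilde T_h)^{-1}\nabla_{\tilde g_h}\tilde v_h|\le C h^2\,|\nabla_g v|$ on $\tau_j$.

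Finally I would integrate this pointwise bound over each $\tau_j$ against the volume element of $\manifold$ (the change of variables to $\tau_{h,j}$ introduces only the uniformly bounded factor $\sqrt{\det g_j}$, and the norm equivalence of Proposition \ref{prop:appr_manifolds}(iii) could equally be invoked), sum over $j\in J_h$, and take the square root to reach $\norm{\nabla_g v-(\tilde T_h)^{-1}\nabla_{\tilde g_h}\tilde v_h}_{0,\manifold}\lesssim h^2\norm{\nabla_g v}_{0,\manifold}$. The main obstacle is conceptual rather than computational: recognizing that the pullback forces the parameter representations of $v$ and $\tilde v_h$ to coincide, so that the \emph{entire} error is carried by the Jacobian difference, and then securing \emph{uniform} constants in the pseudo-inverse perturbation step. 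This is exactly where the nondegeneracy of $\partial\f r_{\tau_{h,j}}$ (shape regularity and bounded curvature) and the per-triangle estimate \eqref{eq:geometric_error2} do the essential work.
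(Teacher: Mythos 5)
Your proof is correct and takes essentially the same route as the paper's: localize to the triangle pairs over the common parameter domain $\tau_{h,j}$, use that the pullback forces $v$ and $\tilde{v}_h$ to share the representation $\bar v$, so the pointwise error is $\nabla\bar v\bigl((\partial\f r_{\tau_{h,j}})^\dag-(\partial\tilde{\f r}_{\tau_{h,j}})^\dag\bigr)$, reduce this to $\mathcal{O}(h^2)\abs{\nabla_g v}$ via \eqref{eq:geometric_error2} and uniform regularity of the Jacobians, and finish by integrating, summing over $j\in J_h$, and taking square roots. The only difference is in the middle inequality and is cosmetic: where you invoke a Wedin-type perturbation bound for the pseudo-inverse difference together with $\abs{\nabla\bar v}\lesssim\abs{\nabla_g v}$, the paper factors the error as $\bigl[\nabla\bar v(\partial\f r_{\tau_{h,j}})^\dag\bigr]\bigl[\operatorname{I}-\partial\f r_{\tau_{h,j}}(\partial\tilde{\f r}_{\tau_{h,j}})^\dag\bigr]$ and bounds the second factor in sup-norm — your variant is, if anything, slightly more careful, since that matrix contains the normal projector of $\widetilde{\manifold}_h$ and is $\mathcal{O}(h^2)$ only in its action on vectors tangent to $\manifold$, not in matrix norm.
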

\begin{proof}
	Recall \eqref{eq:local_gradient} for the definition of gradient in the local parametric domain, particularly, we take the local parametric domain to be $\tau_{h,j}$. 
	Then we have for every $j\in J_h$ the following estimates
	\begin{equation}\label{eq:ineq_lem_4.2}
	\begin{aligned}
	&\norm{\nabla_g v -  (\tilde{T}_h)^{-1} \nabla_{\tilde{g}_h}\tilde{v}_h }^2_{0,\tau_j}  =\int_{\tau_{h,j}} \abs{\nabla \bar{v}\left((\partial \f r_{\tau_{h,j}})^\dag -  (\partial \tilde{\f r}_{\tau_{h,j}} )^\dag\right) }^2\sqrt{\det(\partial \f r_{\tau_{h,j}}  (\partial \f r_{\tau_{h,j}} )^\top)}\\
	\leq & \norm{\operatorname{I} -  \partial \f r_{\tau_{h,j}}  (\partial \tilde{\f r}_{\tau_{h,j}} )^\dag}_{\infty,\tau_{h,j}}^2\int_{\tau_{h,j}} \abs{\nabla \bar{v}(\partial \f r_{\tau_{h,j}} )^\dag }^2\sqrt{\det(\partial \f r_{\tau_{h,j}}  (\partial \f r_{\tau_{h,j}} )^\top)}\\
	=& \norm{\operatorname{I} -  \partial \f r_{\tau_{h,j}}  (\partial \tilde{\f r}_{\tau_{h,j}} )^\dag}_{\infty,\tau_{h,j}}^2 \norm{\nabla_g v  }^2_{0,\tau_j} .
	\end{aligned}
	\end{equation} 
	Using the estimate \eqref{eq:geometric_error2} from Proposition \ref{prop:appr_surfaces} and the fact that $\tilde{\f r}_{\tau_{h,j}}$ is regular thus $\partial \tilde{\f r}_{\tau_{h,j}}$ and its inverse are uniformly bounded, we derive the following:
	\[\norm{\operatorname{I} -  \partial \f r_{\tau_{h,j}}  (\partial \tilde{\f r}_{\tau_{h,j}} )^\dag}_{\infty,\tau_{h,j}}\lesssim \norm{  \partial \f r_{\tau_{h,j}} -  \partial \tilde{\f r}_{\tau_{h,j}} }_{\infty,\tau_{h,j}}    =\mathcal{O}(h^2)   \quad \text{ for all } j\in J_h .\]
	We go back to \eqref{eq:ineq_lem_4.2} with the above estimate. Then \eqref{eq:inter_estimate1} is proven by adding all the terms of $j\in J_h$ and taking the square root.
\end{proof}

Now we are ready to show the superconvergence of the gradient recovery on $\manifold_h$, which is considered to be an answer to the open question in \cite{WeiChenHuang2010}.
\begin{theorem}
	\label{thm:superconvergence_error_surface}
	Let Assumption \ref{ass:irregular} hold, and 
	$u\in  W^{3,\infty}(\manifold)$ be the solution of \eqref{eq:var_laplace}, and $u_h$ be the solution of \eqref{eq:d_var_laplace}. Then using the isoparametric gradient recovery scheme in Algorithm \ref{alg:isop_gradient}, we have
	\begin{equation}
	\label{eq:superconvergence_error_surface}
	\begin{array}{ll}
	\norm{\nabla_g u - T_h^{-1}G_h u_h}_{0,\manifold}\leq  & Ch^2\left( \sqrt{\mathcal{A}(\manifold)}  D(g,g^{-1})\norm{u}_{3,\infty,\manifold}  + \norm{f}_{0,\manifold} \right)   \\
	& + \quad C h^{1+\min \set{1,\sigma}}\left(\norm{u}_{3,\manifold}+\norm{u}_{2,\infty,\manifold} \right),
	\end{array}
	\end{equation}
where $\mathcal{A}$ is the area function, and $D(g,g^{-1})$ is some constant which depends on the Riemann metric tensor.
\end{theorem}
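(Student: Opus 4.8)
The plan is to transport the whole estimate onto the auxiliary smooth surface $\widetilde{\manifold}_h$ constructed in Proposition \ref{prop:appr_manifolds}, on which $\manifold_h$ is an \emph{exact} linear interpolation. The crucial observation is Proposition \ref{prop:appr_manifolds}(i): the recovered geometric Jacobian $J_{r,i}$ produced in Step $(3)$ of Algorithm \ref{alg:isop_gradient} agrees at every vertex with the true Jacobian of $\widetilde{\manifold}_h$. Consequently the recovered gradient $T_h^{-1}G_h u_h$ coincides with the recovered gradient $(\tilde{T}_h)^{-1}\tilde{G}_h\tilde{u}_h$ computed on $\widetilde{\manifold}_h$ equipped with its \emph{exact} geometry, where $\tilde{u}_h$ is the piecewise linear function on $\widetilde{\manifold}_h$ carrying the nodal values $(u_{h,i})_{i\in I_h}$. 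This reduces a problem with deviated geometry to a standard recovery problem on a smooth surface whose interpolation $\manifold_h$ satisfies the $\mathcal{O}(h^{2\sigma})$ irregular condition by Proposition \ref{prop:geometry}(i).

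First I would split the error with the triangle inequality through the pullback $\tilde{u}$ of the exact solution $u$ to $\widetilde{\manifold}_h$ and its nodal interpolant $\tilde{u}_I$:
\begin{equation*}
\nabla_g u - T_h^{-1}G_h u_h = \big(\nabla_g u - (\tilde{T}_h)^{-1}\nabla_{\tilde{g}_h}\tilde{u}\big) + (\tilde{T}_h)^{-1}\big(\nabla_{\tilde{g}_h}\tilde{u} - \tilde{G}_h\tilde{u}_I\big) + (\tilde{T}_h)^{-1}\tilde{G}_h\big(\tilde{u}_I - \tilde{u}_h\big).
\end{equation*}
The first term is purely geometric and is controlled directly by Lemma \ref{lem:surface_closeness}, giving $\lesssim h^2\norm{\nabla_g u}_{0,\manifold}$; bounding $\norm{\nabla_g u}_{0,\manifold}$ by $\sqrt{\mathcal{A}(\manifold)}\,D(g,g^{-1})\norm{u}_{3,\infty,\manifold}$ produces the first contribution to the $h^2$ part of the claim. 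The second term is the \emph{recovery consistency error} of the polynomial preserving recovery operator on the \emph{smooth} surface $\widetilde{\manifold}_h$; since $\manifold_h$ is the exact interpolation of $\widetilde{\manifold}_h$ and, by Proposition \ref{prop:geometry}(i), satisfies the irregular condition, I can invoke the planar polynomial preserving recovery theory pulled back isoparametrically: on the $\mathcal{O}(h^2)$-parallelogram part it is $\mathcal{O}(h^2)\norm{u}_{3,\infty}$, while the $\mathcal{O}(h^{2\sigma})$ irregular part only yields $\mathcal{O}(h^{1+\min\{1,\sigma\}})\norm{u}_{3}$, which is the origin of the $h^{1+\min\{1,\sigma\}}$ rate in \eqref{eq:superconvergence_error_surface}. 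The third term is treated by the uniform boundedness of the recovery operator (\cite[Theorem 3.2]{NagaZhang2004}), which reduces it to the finite element \emph{supercloseness} $\norm{\nabla_{\tilde{g}_h}(\tilde{u}_I - \tilde{u}_h)}_{0,\widetilde{\manifold}_h}$, equivalently (Proposition \ref{prop:appr_manifolds}(iii)) to $\norm{\nabla_{g_h}(u_I - u_h)}_{0,\manifold_h}$.

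The hard part will be this last, supercloseness, estimate, because $u_h$ is computed by solving \eqref{eq:d_var_laplace} on the deviated surface $\manifold_h$ rather than on $\manifold$ or on $\widetilde{\manifold}_h$. I would bound $\norm{\nabla_{g_h}(u_I - u_h)}_{0,\manifold_h}$ by comparing the discrete bilinear form on $\manifold_h$ with the exact form pulled back from $\manifold$: the mesh-quality part obeys the classical supercloseness bound $\mathcal{O}(h^{1+\min\{1,\sigma\}})(\norm{u}_3 + \norm{u}_{2,\infty})$ under the irregular condition, while the geometric perturbation of the bilinear form and of the load term (the mismatch between $\int_{\manifold_h}f_h v_h\,dvol_h$ and the pullback of $\int_\manifold f v\,dvol$) contributes the $\mathcal{O}(h^2)\norm{f}_{0,\manifold}$ term; here the geometric supercloseness \eqref{eq:metric_condition}--\eqref{eq:jacob_condition} together with the Jacobian estimate \eqref{eq:geometric_error2} are exactly what keep these consistency errors at order $h^2$. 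Collecting the three contributions and absorbing the metric-dependent constants into $D(g,g^{-1})$ and $\sqrt{\mathcal{A}(\manifold)}$ yields \eqref{eq:superconvergence_error_surface}. The only genuinely delicate point is disentangling the geometric consistency error from the intrinsic mesh-supercloseness error in the third term, since both are driven by the deviation of $\manifold_h$ and must be kept separately at orders $h^2$ and $h^{1+\min\{1,\sigma\}}$.
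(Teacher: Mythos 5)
Your proposal is correct in outline and rests on the same two pillars as the paper's own proof: the auxiliary smooth manifold $\widetilde{\manifold}_h$ of Proposition \ref{prop:appr_manifolds}, of which $\manifold_h$ is an exact linear interpolation and whose vertex Jacobians coincide with the recovered ones, together with Lemma \ref{lem:surface_closeness} for the purely geometric part of the error. Where you diverge is in the treatment of everything after that geometric term. The paper uses only a two-term splitting, $\norm{\nabla_g u - (\tilde{T}_h)^{-1}\nabla_{\tilde{g}_h}\tilde{u}_h}_{0,\manifold}$ plus $\norm{(\tilde{T}_h)^{-1}\nabla_{\tilde{g}_h}\tilde{u}_h - T_h^{-1}G_h u_h}_{0,\manifold}$, and disposes of the second piece in one stroke by invoking the known superconvergence theorem for exact interpolations of smooth surfaces, \cite[Theorem 5.3]{DonGuo19}, then transporting the bound from $\widetilde{\manifold}_h$ back to $\manifold$ via the norm equivalence \eqref{eq:norm_equ}. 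You instead unfold that black box into the classical three-term recovery argument: consistency of the polynomial preserving recovery on $\widetilde{\manifold}_h$, boundedness of the recovery operator from \cite[Theorem 3.2]{NagaZhang2004}, and finite element supercloseness. Both routes are sound; the paper's is shorter and cleanly separates the genuinely new deviated-geometry analysis from known machinery, while yours is more self-contained and makes explicit a point the citation-based proof glosses over: $u_h$ solves \eqref{eq:d_var_laplace} on $\manifold_h$ with data transported from $\manifold$, so the pullback of $u$ is not exactly a Laplace--Beltrami solution on $\widetilde{\manifold}_h$, and the resulting perturbation of the bilinear form and load term must be controlled --- your ``hard part,'' which is precisely where \eqref{eq:jacob_condition} and \eqref{eq:geometric_error2} keep the consistency error at order $h^2$. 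One inaccuracy that does not affect the final bound: the $h^{1+\min\set{1,\sigma}}$ rate originates from the supercloseness term alone (as in the Bank--Xu and Xu--Zhang theory); the consistency error of polynomial preserving recovery is $\mathcal{O}(h^2)\norm{u}_{3,\infty,\manifold}$ under shape regularity alone and does not degrade on the irregular part, so your weaker estimate for the second term is merely over-conservative, not wrong.
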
 
\begin{proof}
	This is readily shown using the triangle inequality
	\begin{equation}\label{eq:overall_error}
	\norm{\nabla_g u - T_h^{-1}G_h u_h}_{0,\manifold}  \leq  \norm{\nabla_g u - (\tilde{T}_h)^{-1} \nabla_{\tilde{g}_h}\tilde{u}_h }_{0,\manifold}
	+ \norm{(\tilde{T}_h)^{-1} \nabla_{\tilde{g}_h}\tilde{u}_h - T_h^{-1}G_h u_h}_{0,\manifold}.
	\end{equation}
	The first part on the right hand side of \eqref{eq:overall_error} is estimated using Lemma \ref{lem:surface_closeness}:
	\begin{equation}\label{eq:geometry_error}
	\norm{\nabla_g u - (\tilde{T}_h)^{-1} \nabla_{\tilde{g}_h}\tilde{u}_h }_{0,\manifold} \lesssim   h^2 \norm{\nabla_g u  }_{0,\manifold}.
	\end{equation}
	Assumption \ref{ass:irregular}, Proposition \ref{prop:geometry} and Proposition \ref{prop:appr_surfaces} ensure that the geometric conditions of \cite[Theorem 5.3]{DonGuo20} is satisfied, i.e., the $\mathcal{O}(h^{2\sigma})$ irregular condition, and the vertices of $\manifold_h$ is located on $\widetilde{\manifold}_{h}$ which is $C^3$ smooth.
	Then the second term on the right hand side of \eqref{eq:overall_error} 
	is estimated using \cite[Theorem 5.3]{DonGuo20}.
	That gives
	\begin{equation*}
	\begin{array}{ll}
	\norm{  \nabla_{\tilde{g}_h}\tilde{u}_h -\tilde{T}_hT_h^{-1}G_h u_h}_{0,\widetilde{\manifold}_h} \le & \tilde{C} h^2\left( \sqrt{\mathcal{A}(\widetilde{\manifold}_h)}  \tilde{D}(\tilde{g},\tilde{g}^{-1})\norm{\tilde{u}_h}_{3,\infty,\widetilde{\manifold}_h}  + \norm{\tilde{f}}_{0,\widetilde{\manifold}_h} \right)  \\
	& + \tilde{C} \; h^{1+\min \set{1,\sigma}}\left(\norm{\tilde{u}_h}_{3,\widetilde{\manifold}_h}+\norm{\tilde{u}_h}_{2,\infty,\widetilde{\manifold}_h} \right).
	\end{array}
	\end{equation*}
	The equivalence relation from \eqref{eq:norm_equ} in Proposition \ref{prop:appr_surfaces} gives the estimate on $\manifold$, which is
	\begin{equation}
	\label{eq:recovery_ex_error}
	\begin{array}{ll}
	\norm{(\tilde{T}_h)^{-1}  \nabla_{g} u_h -T_h^{-1}G_h u_h}_{0,\manifold} \le & C h^2\left( \sqrt{\mathcal{A}(\manifold)}  D(g,g^{-1})\norm{u}_{3,\infty,\manifold}  + \norm{f}_{0,\manifold} \right)  \\
	& +  C \; h^{1+\min \set{1,\sigma}}\left(\norm{u}_{3,\manifold}+\norm{u}_{2,\infty,\manifold} \right).
	\end{array}
	\end{equation}
	Using embedding theorem that the right-hand side of \eqref{eq:geometry_error} can actually be bounded by the first term on the right-hand side of \eqref{eq:recovery_ex_error}.
	The proof is concluded by putting \eqref{eq:geometry_error} and \eqref{eq:recovery_ex_error} together.
\end{proof}

\section{Numerical results}
\label{sec:numerics}
In this section, we present numerical examples to verify the theoretical results.
Two types of surfaces are tested as benchmark examples.
The first type is the unit sphere, where we add artificial $\mathcal{O}(h^2)$ perturbation to the discretized mesh in order to match exactly our assumptions. With this, we are able to show the geometric approximation of $\manifold_{h}$ to $\manifold_{h}^*$.
The second one is a more general surface, where the vertices of its discretization mesh do not necessarily locate on the exact surface.

We shall consider two different members in the family of Algorithm \ref{alg:isop_gradient}:
(i) Parametric polynomial preserving recovery denoted by $G^{pppr}_h$, generalized from PPR method, and 
(ii) Parametric superconvergent patch recovery method denoted by $G^{pspr}_{h}$, a generalization of the renowned $ZZ$-scheme.
For the sake of simplifying the notation,  we define:
\begin{align*}
&De^* = \norm{T_h\nabla_g u -\nabla_{g_h} u_h }_{0, \manifold_h^*}, 
&De= \norm{T_h\nabla_g u -\nabla_{g_h} u_h }_{0, \manifold_h}, \\
&De_I^*= \norm{\nabla_{g_h} u_I - \nabla_{g_h} u_h}_{0, \manifold_h^*},
&De_I = \norm{\nabla_{g_h} u_I - \nabla_{g_h} u_h}_{0, \manifold_h},\\
&De_r^* = \norm{T_h\nabla_g u - G^{pppr}_{h} u_h}_{0, \manifold_h^*},
&De_r= \norm{T_h\nabla_g u - G^{pppr}_{h} u_h}_{0,  \manifold_h},\\
&De_{r_2}^* = \norm{T_h\nabla_g u - G^{pspr}_{h} u_h}_{0, \manifold_h^*},
&De_{r_2}= \norm{T_h\nabla_g u - G^{pspr}_{h} u_h}_{0,  \manifold_h};
\end{align*}
where $u_h$ is the finite element solution, $u$ is the analytical solution and $u_I$ is the linear finite element interpolation of $u$.
We also remind that $\manifold_{h}^*$ denotes the exact interpolation of $\manifold$.

\subsection{Examples on a sphere with deviations}
We test with numerical solutions of the Laplace-Beltrami equation on the unit sphere.
This is a good toy example since we can artificially design the deviations to the discretization and compare it to the adhoc results.
The right hand side function $f$ is chosen to fit the exact solution $u=x_1x_2$.
For the unit sphere, it is relatively  simple to generate interpolated triangular meshes, denoted by $\manifold_h^*$.

\subsubsection{Verification of geometric supercloseness}
In this test, we firstly artificially add $\mathcal{O}(h^2)$ perturbation along both normal and tangential directions at each vertex of $\manifold_h^*$ and the resulting deviated mesh is denoted by $\manifold_h^3$. The magnitude of the perturbation is chosen as  $h^2$.  We firstly to verify the geometric supercloseness property.  To do this, we take each triangular element in $ \manifold_h^*$ as the reference element and compute the linear transformation from the reference element to the corresponding triangular element in $\manifold_h^3$.  
The numerical errors are displayed in Table \ref{tab:tan2}.
Clearly, all the maximal norm errors decay at rates of $\mathcal{O}(h)$ and there is no geometric supercloseness. The observed first-order convergence rates match well with the theoretical result in the Proposition \ref{prop:geometry} since we add  $\mathcal{O}(h^2)$  perturbations in the tangential direction and the Assumption \ref{ass:irregular} is not fulfilled.

\begin{table}[htb!]
	\centering
	\caption{Difference of geometric quantities between the $\manifold_h^*$ and  $\manifold_h^3$}
	\renewcommand{\arraystretch}{1.2}
	\resizebox{\textwidth}{!}{
		\begin{tabular}{|c|c|c|c|c|c|c|}
			\hline 
			Dof&$\|\partial \Gamma-  \operatorname{Id} \|_{\infty}$&Order&$\norm{\sqrt{\det{g_\Gamma}}- 1}_\infty$&Order&$\norm{g_\Gamma -\operatorname{I}}_\infty$&Order\\ \hline
			162&6.09e-01&--&1.07e+00&--&1.07e+00&--\\ \hline
			642&4.09e-01&0.58&6.57e-01&0.71&6.57e-01&0.71\\ \hline
			2562&2.07e-01&0.98&3.32e-01&0.99&3.32e-01&0.99\\ \hline
			10242&1.00e-01&1.05&1.55e-01&1.09&1.55e-01&1.09\\ \hline
			40962&4.96e-02&1.01&7.54e-02&1.04&7.54e-02&1.04\\ \hline
			163842&2.48e-02&1.00&3.73e-02&1.02&3.73e-02&1.02\\ \hline
			655362&1.24e-02&1.00&1.85e-02&1.01&1.85e-02&1.01\\ \hline
			2621442&6.21e-03&1.00&9.21e-03&1.01&9.21e-03&1.01\\ \hline
	\end{tabular}}
	\renewcommand{\arraystretch}{1}
	\label{tab:tan2}
\end{table}

Then, we construct a deviated discrete surface that satisfies the Assumption \ref{ass:irregular}. To do this, we add $\mathcal{O}(h^2)$ perturbation along with normal directions and  $\mathcal{O}(h^3)$ perturbation along with tangential directions at each vertex of $\manifold_h^*$ and the resulting deviated mesh is denoted by $\manifold_h$. Similarly, we can compute the Jacobian $\partial\Gamma$, metric tensor $g_{\Gamma}$, and the determinant $\sqrt{\det{g_\Gamma}}$. The numerical results are documented in Table \ref{tab:tan3}.  As predicted by the Proposition \ref{prop:geometry}, we can observe the superconvergence of the geometric approximations in the above quantities. 

\begin{table}[htb!]
	\centering
	\caption{Difference of geometric quantities between the $\manifold_h^*$ and  $\manifold_h$}
	\renewcommand{\arraystretch}{1.2}
	\resizebox{\textwidth}{!}{
		\begin{tabular}{|c|c|c|c|c|c|c|}
			\hline 
			Dof&$\|\partial \Gamma-  \operatorname{Id} \|_{\infty}$&Order&$\norm{\sqrt{\det{g_\Gamma}}- 1}_\infty$&Order&$\norm{g_\Gamma -\operatorname{I}}_\infty$&Order\\ \hline
			162&2.02e-01&--&3.44e-01&--&3.44e-01&--\\ \hline
			642&8.61e-02&1.24&1.34e-01&1.36&1.34e-01&1.36\\ \hline
			2562&2.18e-02&1.99&3.60e-02&1.90&3.60e-02&1.90\\ \hline
			10242&5.48e-03&1.99&8.98e-03&2.01&8.98e-03&2.01\\ \hline
			40962&1.37e-03&2.00&2.25e-03&2.00&2.25e-03&2.00\\ \hline
			163842&3.43e-04&2.00&5.63e-04&2.00&5.63e-04&2.00\\ \hline
			655362&8.59e-05&2.00&1.41e-04&2.00&1.41e-04&2.00\\ \hline
			2621442&2.15e-05&2.00&3.52e-05&2.00&3.52e-05&2.00\\ \hline
	\end{tabular}}
	\renewcommand{\arraystretch}{1}
	\label{tab:tan3}
\end{table}

\begin{table}[htb!]
	\centering
	\caption{Difference of geometric quantities between the $\manifold_h^*$ and  $\manifold_h^4$}
	\renewcommand{\arraystretch}{1.2}
	\resizebox{\textwidth}{!}{
		\begin{tabular}{|c|c|c|c|c|c|c|}
			\hline 
			Dof&$\|\partial \Gamma-  \operatorname{Id} \|_{\infty}$&Order&$\norm{\sqrt{\det{g_\Gamma}}- 1}_\infty$&Order&$\norm{g_\Gamma -\operatorname{I}}_\infty$&Order\\ \hline
			162&2.66e-01&--&2.95e-01&--&4.12e-01&--\\ \hline
			642&2.12e-01&0.33&1.30e-01&1.20&1.66e-01&1.32\\ \hline
			2562&9.98e-02&1.09&3.49e-02&1.89&5.07e-02&1.71\\ \hline
			10242&5.79e-02&0.79&9.11e-03&1.94&1.24e-02&2.04\\ \hline
			40962&2.86e-02&1.02&2.24e-03&2.02&3.31e-03&1.90\\ \hline
			163842&1.45e-02&0.98&5.71e-04&1.97&8.25e-04&2.00\\ \hline
			655362&7.31e-03&0.99&1.46e-04&1.96&2.06e-04&2.01\\ \hline
			2621442&3.68e-03&0.99&3.71e-05&1.98&5.25e-05&1.97\\ \hline
	\end{tabular}}
	\renewcommand{\arraystretch}{1}
	\label{tab:normrand}
\end{table}
We also want to emphasize that the condition in \eqref{eq:close_condition2} is essential for Jacobian supercloseness. To demonstrate this, we consider a deviated mesh $\manifold_h^4$ which is constructed by adding $rand\times h^2$ perturbation along with normal directions and  $h^3$ perturbation along with tangential directions at each vertex of $\manifold_h^*$.  We repeat the same computation and report the numerical results in Table \ref{tab:normrand}. Clearly, we can observe the $\mathcal{O}(h^2)$ geometric supercloseness in both metric tensor and determinant of the metric tensor. However, only $\mathcal{O}(h)$ approximation results can be observed for the Jacobian $\partial\Gamma$ which again confirms the Proposition \ref{prop:geometry}.

\subsubsection{Superconvergence of gradient recovery on deviated sphere}

Now, we show the superconvergence of gradient recovery on the deviated sphere with the property of geometric supercloseness.
We solve the Laplace-Beltrami equation on both $\manifold_h^*$ and $\manifold_h$ and the numerical performances are tabulated in Table \ref{tab:sphere}.  For the finite element gradient error,  the expected optimal convergence rate $\mathcal{O}(h)$ can be observed on both the meshes $\manifold_{h}^*$ and $\manifold_{h}$. 
We concentrate on the finite element supercloseness error.  
We can observe  $\mathcal{O}(h^{2})$ supercloseness on $\manifold_h^*$ and   $\mathcal{O}(h^{1.94})$ supercloseness $\manifold_h$.  It gives solid evidence that the $\mathcal{O}(h^{2\sigma})$ irregular condition also holds true for the perturbed mesh $\manifold_h$.
For the recovered gradient error,  we observed almost the same $\mathcal{O}(h^2)$ superconvergence rates on both discretized surfaces using isoparametric gradient recovery schemes,  which validates Theorem \ref{thm:superconvergence_error_surface}.

\begin{table}[htb!]
	\centering
	\caption{Numerical results of solving Laplace-Beltrami equation on the sphere}
	\renewcommand{\arraystretch}{1.2}
	\resizebox{\textwidth}{!}{
		\begin{tabular}{|c|c|c|c|c|c|c|c|c|}
			\hline 
			Dof&$De^*$&Order&$De^*_{I}$&Order&$De^*_{r}$&Order&$De^*_{r_2}$&Order\\ \hline\hline
			162&5.13e-01&--&9.71e-01&--&5.25e-01&--&5.25e-01&--\\ \hline
			642&1.96e-01&1.40&9.58e-03&6.71&5.41e-02&3.30&5.40e-02&3.30\\ \hline
			2562&9.83e-02&1.00&2.60e-03&1.88&1.37e-02&1.98&1.37e-02&1.98\\ \hline
			10242&4.92e-02&1.00&6.97e-04&1.90&3.45e-03&1.99&3.48e-03&1.98\\ \hline
			40962&2.46e-02&1.00&1.85e-04&1.92&8.67e-04&1.99&8.86e-04&1.97\\ \hline
			163842&1.23e-02&1.00&4.86e-05&1.93&2.17e-04&2.00&2.29e-04&1.95\\ \hline
			655362&6.15e-03&1.00&1.27e-05&1.93&5.45e-05&2.00&6.05e-05&1.92\\ \hline
			2621442&3.07e-03&1.00&3.32e-06&1.94&1.37e-05&2.00&1.67e-05&1.86\\ \hline		
			Dof&$De$&Order&$De_{I}$&Order&$De_{r}$&Order&$De_{r_2}$&Order\\ \hline\hline
			162&4.87e-01&--&7.89e-01&--&4.75e-01&--&4.74e-01&--\\ \hline
			642&2.12e-01&1.21&1.06e-01&2.91&2.11e-02&4.53&2.14e-02&4.50\\ \hline
			2562&1.00e-01&1.08&2.66e-02&2.00&5.21e-03&2.02&5.32e-03&2.01\\ \hline
			10242&4.94e-02&1.02&6.67e-03&1.99&1.32e-03&1.99&1.39e-03&1.93\\ \hline
			40962&2.46e-02&1.01&1.68e-03&1.99&3.34e-04&1.98&3.83e-04&1.86\\ \hline
			163842&1.23e-02&1.00&4.20e-04&2.00&8.50e-05&1.98&1.11e-04&1.79\\ \hline
			655362&6.15e-03&1.00&1.05e-04&2.00&2.16e-05&1.98&3.41e-05&1.70\\ \hline
			2621442&3.07e-03&1.00&2.63e-05&2.00&5.48e-06&1.98&1.10e-05&1.63\\ \hline
	\end{tabular}}
	\renewcommand{\arraystretch}{1}
	\label{tab:sphere}
\end{table}

The numerical results of using isoparametric SPR show superconvergence rate $\mathcal{O}(h^{1.9})$ and $\mathcal{O}(h^{1.7})$ on $\manifold_h^*$ and $\manifold_h$, respectively, which indicates that it is also a valid algorithm.

\subsection{The Laplace-Beltrami equation on general surface} 
In this example,  we consider a general surface which can be represented as the zero level-set of the following function 
\begin{equation*}
\Phi(x) = (x^2 - 1)^2 + (y^2 - 1)^2 + (z^2 - 1)^2 - 1.05.
\end{equation*}
We solve  the Laplace-Beltrami type equation
\begin{equation*}
-\Delta_g u + u = f;
\end{equation*}
with  the exact solution $u =\exp(x^2+y^2+z^2)$.
The right hand side function $f$ can be  computed from $u$.

The initial mesh of the general surface was generated using the three-dimensional surface mesh generation module of the Computational Geometry Algorithms Library \cite{cgal}.
To get meshes in other levels, we first perform the uniform refinement. Then we project the newest vertices onto $\manifold$.
In the general case, there is no explicit projection map available.
Hence we adopt the first-order approximation of projection map as given in \cite{DemlowDziuk2007}.
Therefore, the vertices of the meshes are not on the exact surface $\manifold$ but in a $\mathcal{O}(h^2)$ neighborhood along the normal vectors.

\begin{figure}[ht!]
	\centering
	\includegraphics[width=0.4\textwidth]{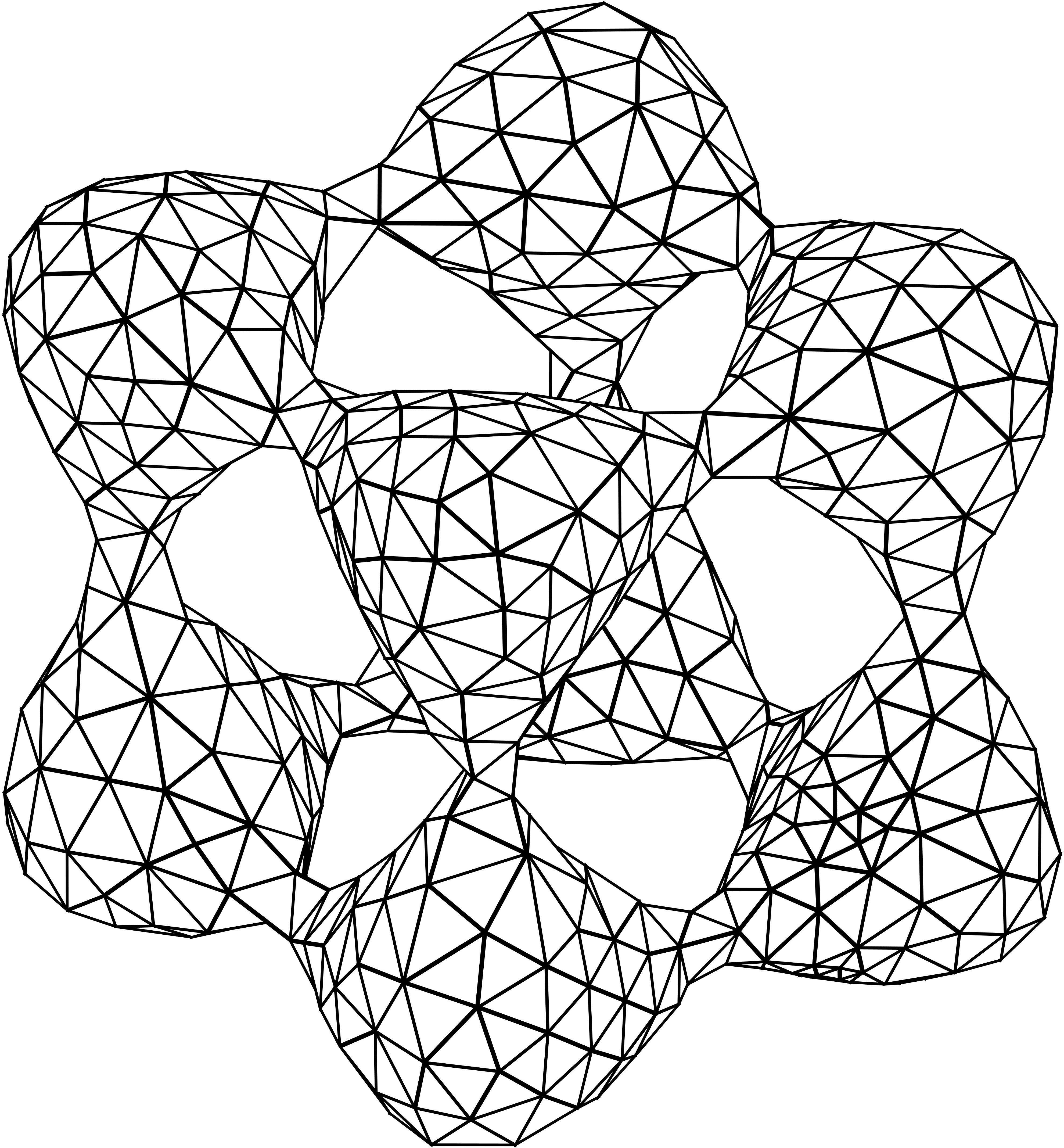}
	\caption{Initial mesh on a general surface}
	\label{fig:quartic}
\end{figure}

\begin{table}[ht]
	\centering
	\caption{Numerical results of solving Laplace-Beltrami equation on a general surface}
	\renewcommand{\arraystretch}{1.2}
	\resizebox{\textwidth}{!}{
		\begin{tabular}{|c|c|c|c|c|c|c|c|c|}
			\hline 
			Dof&$De$&Order&$De_{I}$&Order&$De_{r}$&Order&$De_{r_2}$&Order\\ \hline\hline
			701&1.32e+01&--&5.67e+00&--&1.12e+01&--&1.19e+01&--\\ \hline
			2828&6.93e+00&0.93&1.67e+00&1.75&3.66e+00&1.61&3.68e+00&1.69\\ \hline
			11336&3.52e+00&0.98&4.89e-01&1.77&1.04e+00&1.81&1.06e+00&1.80\\ \hline
			45368&1.77e+00&0.99&1.34e-01&1.87&2.76e-01&1.91&2.88e-01&1.87\\ \hline
			181496&8.86e-01&1.00&3.54e-02&1.92&7.12e-02&1.96&7.77e-02&1.89\\ \hline
			726008&4.43e-01&1.00&9.21e-03&1.94&1.81e-02&1.98&2.15e-02&1.86\\ \hline
	\end{tabular}}
	\renewcommand{\arraystretch}{1}
	\label{tab:quartic}
\end{table}
The first level of mesh is plotted in Figure \ref{fig:quartic}.
The history of numerical errors is documented in Table \ref{tab:quartic}.
As expected, we can observe the $\mathcal{O}(h)$ optimal convergence rate for the finite element gradient.
The rate of $\mathcal{O}(h^{1.9})$ can be observed for the error between the finite element gradient and the gradient of the interpolation of the exact solution.
Again, it means the mesh $\manifold_h$ satisfies the $\mathcal{O}(h^{2\sigma})$ irregular condition.
As depicted by Theorem \ref{thm:superconvergence_error_surface}, the recovered gradient using parametric polynomial preserving recovery is superconvergent to the exact gradient at the rate of $\mathcal{O}(h^2)$ even though the vertices are not located on the exact surface.  For parametric superconvergent patch recovery,  it deteriorates a litter bit but we can still observe $\mathcal{O}(h^{1.86})$ superconvergence.

\vspace{2em}
\section{Conclusion} 
\label{sec:conclusion}

In this paper,  we have established a new concept called geometric supercloseness which is fundamental for analyzing the superconvergence of differential structure on surfaces.
It is important for the numerical analysis on deviated surfaces, particularly when there has no information of the exact geometry.
In such cases, the vertices are typically not necessarily located on the underlying exact surfaces, and the exact normal vectors (or tangent spaces) are not known. 
We focused on investigating the post-processing of numerical solutions, where an algorithmic framework for superconvergent gradient recovery methods on deviated discretized surfaces is proposed and analyzed.
It concluded some open questions existing in the literature.

Although we have only studied the condition for piece-wise linear approximations,  the higher-order cases are of course interesting and meaningful to investigate as well, particularly the discretization assumptions that lead to higher-order geometric supercloseness.
On the other hand, we have concentrated on scalar functions in this paper. For those problems whose solutions are tangent vector fields on surfaces, we would wish to have similar error analysis with deviated surfaces as well. 
In that case, we need to investigate the superconvergence of curvature terms involving higher-order differential structures, then the higher-order geometric supercloseness would be interesting for future study.

\vspace{2em}

\noindent {\bf Declarations.} 
The authors have no relevant financial or nonfinancial interests to disclose.

\vspace{2em}

\noindent {\bf Data Availability.} 
The datasets generated and/or analyzed during the current study are available from the authors on reasonable request.

\vspace{2em}

\noindent {\bf Acknowledgments.} 
The authors thank the referees for their patience and comments which have helped to improve the presentation of the paper.
The authors acknowledge Dr. Ao Sun for some helpful discussion on the Whitney Approximation Theorem.
The work of GD was partially  supported by an NSFC grant No. 12001194, and an NSF grant of Hunan Province No. 2024JJ5413. The work of HG was partially supported by the Andrew Sisson Fund, Dyason Fellowship, and the Faculty Science Researcher Development Grant of the University of Melbourne.
The work of TG was partially supported by an NSFC grant No. 12101228, and an Innovative Platform Project of Hunan Province No. 20K078.

\bibliographystyle{plain}

\end{document}